\theoremstyle{plain}
\newtheorem{thm}{Theorem}[section]
\newtheorem{lem}[thm]{Lemma}
\newtheorem{prop}[thm]{Proposition}
\theoremstyle{definition}
\newtheorem{ex}[thm]{Example}
\theoremstyle{remark}
\newtheorem{rem}[thm]{Remark}
\newtheorem*{remark*}{Remark}
\numberwithin{equation}{section}
        \newcommand{\field}[1]{{\mathbb{#1}}}
        \newcommand{\NN}{\field{N}}
        \newcommand{\ZZ}{\field{Z}}
        \newcommand{\RR}{\field{R}}
        \newcommand{\CC}{\field{C}}
\begin{document}

\title[Bochner-Schr\"odinger operator on symplectic manifolds]{Semiclassical spectral analysis of the Bochner-Schr\"odinger operator on symplectic manifolds of bounded geometry}

\author[Y. A. Kordyukov]{Yuri A. Kordyukov}
\address{Institute of Mathematics, Ufa Federal Research Centre, Russian Academy of Sciences, 112~Chernyshevsky str., 450008 Ufa, Russia and Kazan Federal University, 18 Kremlyovskaya str., Kazan, 420008, Russia} \email{yurikor@matem.anrb.ru}

\thanks{Partially supported by the development program of the Regional Scientific and Educational Mathematical Center of the Volga Federal District, agreement N 075-02-2020-1478.}

\subjclass[2000]{Primary 58J37; Secondary 53D50}

\keywords{symplectic manifold, Bochner-Schr\"odinger operator, semiclassial asymptotics, spectrum,  manifolds of bounded geometry}

\begin{abstract}
We study the Bochner-Schr\"odinger operator $H_{p}=\frac 1p\Delta^{L^p\otimes E}+V$ on high tensor powers of a positive line bundle $L$ on a symplectic manifold of bounded geometry. First, we give a rough asymptotic description of its spectrum in terms of the spectra of the model operators. This allows us to prove the existence of gaps in the spectrum under some conditions on the curvature of the line bundle. Then we consider the spectral projection of such an operator corresponding to an interval whose extreme points are not in the spectrum and study asymptotic behavior of its kernel. First, we establish the off-diagonal exponential estimate. Then we state a complete asymptotic expansion in a fixed neighborhood of the diagonal.
\end{abstract}



\date{December 28, 2020}

 \maketitle
\section{Introduction}
\subsection{The setting}
Let $(X,g)$ be a smooth Riemannian manifold of dimension $d$ without boundary, $(L,h^L)$ a Hermitian line bundle on $X$ with a Hermitian connection $\nabla^L$ and $(E,h^E)$ a Hermitian vector bundle of rank $r$ on $X$ with a Hermitian connection $\nabla^E$. We will suppose that $(X, g)$ is  a manifold of bounded geometry and $L$ and $E$ have bounded geometry. This means that the curvatures $R^{TX}$, $R^L$ and $R^E$ of the Levi-Civita connection $\nabla^{TX}$, connections $\nabla^L$ and $\nabla^E$, respectively, and their derivatives of any order are uniformly bounded on $X$ in the norm induced by $g$, $h^L$ and $h^E$, and the injectivity radius $r_X$ of $(X, g)$ is positive.

For any $p\in \NN$, let $L^p:=L^{\otimes p}$ be the $p$th tensor power of $L$ and let
\[
\nabla^{L^p\otimes E}: {C}^\infty(X,L^p\otimes E)\to
{C}^\infty(X, T^*X \otimes L^p\otimes E)
\] 
be the Hermitian connection on $L^p\otimes E$ induced by $\nabla^{L}$ and $\nabla^E$. Consider the induced Bochner Laplacian $\Delta^{L^p\otimes E}$ acting on $C^\infty(X,L^p\otimes E)$ by
\begin{equation}\label{e:def-Bochner}
\Delta^{L^p\otimes E}=\big(\nabla^{L^p\otimes E}\big)^{\!*}\,
\nabla^{L^p\otimes E},
\end{equation} 
where $\big(\nabla^{L^p\otimes E}\big)^{\!*}: {C}^\infty(X,T^*X\otimes L^p\otimes E)\to
{C}^\infty(X,L^p\otimes E)$ is the formal adjoint of  $\nabla^{L^p\otimes E}$. Let $V\in C^\infty(X,\operatorname{End}(E))$ be a self-adjoint endomorphism of $E$. We will assume that $V$ and its derivatives of any order are uniformly bounded on $X$ in the norm induced by $g$ and $h^E$. 
We will study the Bochner-Schr\"odinger operator $H_p$ acting on $C^\infty(X,L^p\otimes E)$ by
\[
H_{p}=\frac 1p\Delta^{L^p\otimes E}+V. 
\] 
Since $(X,g)$ is complete, the operator $H_p$ is essentially self-adjoint  in the Hilbert space $L^2(X,L^p\otimes E)$ with initial domain  $C^\infty_c(X,L^p\otimes E)$, see \cite[Theorem 2.4]{ko-ma-ma}. We still denote by $H_p$ its unique self-adjoint extension, and by $\sigma(H_p)$ its spectrum in $L^2(X,L^p\otimes E)$.

Consider the real-valued closed 2-form $\mathbf B$ (the magnetic field) given by 
\begin{equation}\label{e:def-omega}
\mathbf B=iR^L. 
\end{equation} 
We assume that $\mathbf B$ is non-degenerate. Thus, $X$ is a symplectic manifold. In particular, its dimension is even, $d=2n$, $n\in \NN$. 

For $x\in X$, let $B_x : T_xX\to T_xX$ be the skew-adjoint operator such that 
\[
\mathbf B_x(u,v)=g(B_xu,v), \quad u,v\in T_xX. 
\]
The operator $|B_x|:=(B_x^*B_x)^{1/2} : T_xX\to T_xX$ is a positive self-adjoint operator. We assume that it is uniformly positive on $X$: 
\begin{equation}\label{e:uniform-positive}
|B_x|\geq b_0>0, \quad x\in X.
\end{equation} 

\begin{rem}
The operator $H_p$ was introduced and studied by Demailly in \cite{Demailly85,Demailly91}. The study of its spectrum plays an important role in the proof of holomorphic Morse inequalities. 
\end{rem}

\begin{rem}
 Assume that the Hermitian line bundle $(L,h^L)$ is trivial and $(E,h^E)$ is a trivial Hermitian line bundle with a trivial connection $\nabla^E$. Then we can write $\nabla^L=d-i \mathbf A$ with a real-valued 1-form $\mathbf A$ (the magnetic potential), and we have
\[
R^L=-id\mathbf A,\quad \mathbf B=d\mathbf A. 
\]
The operator $H_p$ is related with the semiclassical magnetic Schr\"odinger operator
\[
H_p=\hbar^{-1}[(i\hbar d+\mathbf A)^*(i\hbar d+\mathbf A)+\hbar V], \quad \hbar=\frac{1}{p},\quad p\in \NN. 
\]
It can be also considered as the magnetic Schr\"odinger operator with strong electric and magnetic fields, growing at the same rate: 
\[
H_p=\frac{1}{p}[(d-ip\mathbf A)^*(d-ip\mathbf A)+pV], \quad p\in \NN. 
\]
\end{rem} 

\begin{rem} 
If $X$ is the Euclidean space $\RR^{2n}$ with coordinates $Z=(Z_1,\ldots,Z_{2n})$, we can write the 1-form $\bf A$ as
\[
{\bf A}= \sum_{j=1}^{2n}A_j(Z)\,dZ_j,
\]
the matrix of the Riemannian metric $g$ as $g(Z)=(g_{j\ell}(Z))_{1\leq j,\ell\leq 2n}$
and its inverse as $g(Z)^{-1}=(g^{j\ell}(Z))_{1\leq j,\ell\leq 2n}$.
Denote $|g(Z)|=\det(g(Z))$. Then $\bf B$ is given by 
\[
{\bf B}=\sum_{j<k}B_{jk}\,dZ_j\wedge dZ_k, \quad
B_{jk}=\frac{\partial A_k}{\partial Z_j}-\frac{\partial
A_j}{\partial Z_k}.
\]
Moreover, the operator $H_p$ has the form
\[
H_p=\frac{1}{p}\frac{1}{\sqrt{|g|}}\sum_{1\leq j,\ell\leq 2n}\left(i \frac{\partial}{\partial Z_j}+pA_j\right) \left[\sqrt{|g|} g^{j\ell} \left(i \frac{\partial}{\partial Z_\ell}+pA_\ell\right)\right]+V.
\]
Our assumptions hold, if the matrix $(B_{j\ell}(Z))$ has full rank $2n$ and its eigenvalues are separated from zero uniformly on $Z\in \RR^{2n}$, for any $\alpha \in \ZZ^{2n}_+$ and $1\leq j,\ell\leq 2n$, we have 
\[
\sup_{Z\in \RR^{2n}}|\partial^\alpha g_{j\ell}(Z)|<\infty, \quad \sup_{Z\in \RR^{2n}}|\partial^\alpha B_{j\ell}(Z)|<\infty, 
\]
and the matrix $(g_{j\ell}(Z))$ is positive definite uniformly on $Z\in \RR^{2n}$.
 \end{rem}

\subsection{Description of the spectrum}
Our first result gives an asymptotic description of the spectrum of $H_{p}$ as $p\to \infty$ in terms of the spectra of the model operators.

For an arbitrary $x_0\in X$, the model operator at $x_0$ is a second order differential operator $\mathcal H^{(x_0)}_{p}$, acting on $C^\infty(T_{x_0}X, E_{x_0})$, which is obtained from the operator $H_p$ by freezing coefficients at $x_0$. This operator was introduced by Demailly \cite{Demailly85,Demailly91}.

Consider the trivial Hermitian line bundle $L_0$ over $T_{x_0}X$ and the trivial Hermitian vector bundle $E_0$ over $T_{x_0}X$ with the fiber $E_{x_0}$. We introduce the connection 
\begin{equation}\label{e:nablaL0}
\nabla^{L^p_0}_{v}=\nabla_{v}-ip\alpha_v, 
\end{equation}
acting on $C^\infty(T_{x_0}X, L^p_0\otimes E_0)\cong C^\infty(T_{x_0}X, E_{x_0})$, with the connection one-form $\alpha\in C^\infty(T(T_{x_0}X),\RR)$ given by 
\begin{equation}\label{e:Aflat}
\alpha_v(w)=\frac{1}{2}\mathbf B_{x_0}(v,w),\quad v,w\in T_{x_0}X. 
\end{equation}
Its curvature is constant: $d\alpha=\mathbf B_{x_0}$. 
Denote by $\Delta^{L_0^p}$ the associated Bochner Laplacian.
The model operator $\mathcal H^{(x_0)}_{p}$ on $C^\infty(T_{x_0}X, E_{x_0})$ is defined as 
\begin{equation}\label{e:DeltaL0p}
\mathcal H^{(x_0)}_{p}=\frac 1p\Delta^{L_0^p}+V(x_0).
\end{equation}

Since $B_{x_0}$ is skew-adjoint, its eigenvalues have the form $\pm i a_j(x_0), j=1,\ldots,n,$ with $a_j(x_0)>0$. By \eqref{e:uniform-positive}, $a_j(x_0)\geq b_0>0$ for any $x_0\in X$ and $j=1,\ldots,n$.  Denote by $V_\mu(x_0), \mu=1,\ldots,r$, the eigenvalues of $V(x_0)$. The spectrum of $\mathcal H^{(x_0)}_{p}$ is independent of $p$ and consists of eigenvalues of infinite multiplicity 
\begin{equation}\label{e:def-Sigmax}
\sigma(\mathcal H^{(x_0)}_{p})=\Sigma_{x_0}:=\left\{\Lambda_{\mathbf k,\mu}({x_0})\,:\, \mathbf k\in\ZZ_+^n, \mu=1,\ldots,r\right\}, 
\end{equation}
where, for $\mathbf k=(k_1,\cdots,k_n)\in\ZZ_+^n$, $\mu=1,\ldots,r$ and $x_0\in X$,
\begin{equation}\label{e:def-Lambda}
\Lambda_{\mathbf k,\mu}(x_0)=\sum_{j=1}^n(2k_j+1) a_j(x_0)+V_\mu(x_0).
\end{equation}
In particular, the lowest eigenvalue of $\mathcal H^{(x_0)}_{p}$ is 
\[
\Lambda_0(x_0):=\sum_{j=1}^n a_j(x_0)+\min _\mu V_\mu(x_0). 
\]
Let $\Sigma$ be the union of the spectra of the model operators: 
\[ 
\Sigma=\bigcup_{x\in X}\Sigma_x=\left\{\Lambda_\mathbf {k,\mu}(x)\,:\, \mathbf k\in\ZZ_+^n, \mu=1,\ldots,r, x\in X \right\}.
\]

\begin{thm}\label{t:spectrum}
For any $K>0$, there exists $c>0$ such that for any $p\in \NN$ the spectrum of $H_{p}$ in the interval  $[0,K]$  is  contained in the $cp^{-1/4}$-neighborhood of $\Sigma$.  
\end{thm}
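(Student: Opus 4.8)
The plan is to localize the operator $H_p$ near an arbitrary point and compare it with the model operator $\mathcal H^{(x_0)}_p$, exploiting the fact that rescaled normal coordinates turn high tensor powers into a semiclassical problem. First I would fix $\lambda \in [0,K]$ and assume $\lambda$ lies at distance $> cp^{-1/4}$ from $\Sigma$, i.e. $|\lambda - \Lambda_{\mathbf k,\mu}(x)| \ge cp^{-1/4}$ for all $\mathbf k, \mu, x$; the goal is to show $\lambda \notin \sigma(H_p)$ for large $p$, which (after handling the finitely many $p$ uniformly via the constant $c$) yields the theorem. To do this I would construct an approximate resolvent: take a suitable cover of $X$ by balls $B(x_i, r)$ of a fixed small radius $r$ (possible by bounded geometry, with finite multiplicity of overlap), a subordinate partition of unity $\{\varphi_i^2\}$ with derivatives bounded uniformly in $i$, and for each $i$ use the model resolvent $(\mathcal H^{(x_i)}_p - \lambda)^{-1}$, which exists with norm $\le C p^{1/4}$ because $\lambda$ is $cp^{-1/4}$-far from $\Sigma_{x_i}$. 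Transplanting these back via the rescaled normal coordinate charts and gluing with the partition of unity produces an operator $R_p$ with $\|(H_p - \lambda)R_p - I\| \le \tfrac12$ (say) for $p$ large, hence $\lambda \notin \sigma(H_p)$.

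The technical heart is the comparison estimate: in normal coordinates centered at $x_i$, rescaled by $Z \mapsto Z/\sqrt p$, the operator $H_p$ becomes a perturbation of $\mathcal H^{(x_i)}_p$ whose coefficients differ by $O(|Z|)$ in the metric, $O(|Z|^2)$ in the connection one-form (since $\alpha$ is the quadratic Taylor term of the magnetic potential), and $O(|Z|)$ in the potential $V$. After rescaling, on the support of $\varphi_i$ (radius $\sim r\sqrt p$ in the rescaled variable, but we cut off at a slowly growing radius $\sim p^{1/4}$), these errors are $O(p^{-1/2}\cdot p^{1/4}) = O(p^{-1/4})$ relative to the model, in a sense controlled by the harmonic-oscillator weights. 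One must check that the cutoff at radius $p^{1/4}$ is compatible with the model resolvent bound — this uses that the model operator has discrete, explicitly known spectrum and Gaussian-decaying eigenprojections, so truncating its resolvent costs only an exponentially small error, certainly $o(p^{-1/4})$. I would assemble these facts so that, schematically,
\[
(H_p-\lambda)R_p - I = \sum_i [\,(H_p-\lambda),\varphi_i\,]\,(\mathcal H^{(x_i)}_p-\lambda)^{-1}\chi_i\varphi_i + (\text{model-comparison errors}),
\]
and both terms are $O(p^{-1/8})$ in operator norm after choosing $c$ large: the commutator term gains a factor $p^{-1/2}$ from the rescaling of first derivatives of $\varphi_i$ against the $p^{1/4}$ loss in the resolvent, while the comparison errors are $p^{-1/4}\cdot p^{1/4} = O(1)$ times something — so in fact one must be more careful here and extract the gain from the weights, giving a net $p^{-1/8}$ or similar small power.

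The main obstacle I anticipate is precisely this bookkeeping of competing powers of $p$: the model resolvent bound is only $O(p^{1/4})$ near $\Sigma$, so every error term that is merely $O(p^{-1/4})$ in raw size becomes $O(1)$ after composing with the resolvent, which is not good enough. The resolution has to come from using the harmonic-oscillator structure — weighting by powers of the rescaled distance, or equivalently by powers of the number operator, so that the perturbation $H_p - \mathcal H^{(x_i)}_p$, which carries a factor $|Z|^k$, is controlled as a relatively bounded perturbation with relative bound $o(1)$ rather than naively. Establishing a Hörmander-type parametrix on the model operator with such weights, uniformly in $x_0 \in X$ (which is where bounded geometry enters crucially, to make all constants $x_0$-independent), is the real content; once that is in place the gluing argument is routine. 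The choice of the exponent $1/4$ in the theorem is exactly what makes the two error contributions balance, so the argument cannot be pushed to a better power without additional ideas.
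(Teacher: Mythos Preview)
Your overall plan—build a right parametrix for $H_p-\lambda$ by gluing together local model resolvents $(\mathcal H^{(x_i)}_p-\lambda)^{-1}$ with a partition of unity and show the remainder has norm $<\tfrac12$—is exactly what the paper does. Two points, however, diverge from the paper's execution, and one of them is a genuine confusion in your bookkeeping.

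First, the paper does \emph{not} work in normal coordinates with a rescaling. It uses Darboux charts $\varkappa_{x_0}$ in which $\varkappa_{x_0}^*\mathbf B$ is the \emph{constant} form $\sum_k a_k(x_0)\,dZ_{2k-1}\wedge dZ_{2k}$; in the accompanying trivialization of $L$ the connection one-form is then exactly the linear model form $\alpha$, so there is no $O(|Z|^2)$ connection error at all. The entire difference $H_p^{(x_0)}-\mathcal H^{(x_0)}_p$ comes from the metric coefficients $g^{\ell m}_{x_0}(Z)-\delta^{\ell m}=O(|Z|)$ and from $V_{x_0}(Z)-V_{x_0}(0)=O(|Z|)$. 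Taking the cover by balls of radius $p^{-1/4}$ (rather than fixed radius plus an extra cutoff), these are $O(p^{-1/4})$ uniformly, and the partition-of-unity derivatives are $O(p^{1/4})$. This Darboux step is the simplification that makes the bookkeeping clean; no rescaling is performed for this theorem.

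Second, your ``main obstacle'' is not an obstacle. You say that an $O(p^{-1/4})$ error composed with a resolvent of size $O(p^{1/4})$ gives $O(1)$, ``which is not good enough'', and propose harmonic-oscillator weights to squeeze out a further gain. But the resolvent bound is $\|(\mathcal H^{(x_0)}_p-\lambda)^{-1}\|\le d(\lambda,\Sigma)^{-1}$, and the hypothesis is $d(\lambda,\Sigma)>c\,p^{-1/4}$; the product is therefore $\le C_K/c$, which is $<\tfrac12$ once $c>2C_K$. The $O(1)$ bound \emph{is} good enough because its constant is at our disposal through the choice of $c$ in the statement. The paper proves precisely $\|K_p(\lambda)\|\le C_K\,p^{-1/4}d(\lambda,\Sigma)^{-1}$ for all $\lambda\notin\Sigma$, $|\lambda|<K$, and then sets $c=2C_K$. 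No weighted or number-operator estimates are needed here (they enter only later, for the off-diagonal kernel bounds).

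What the paper does use, and what you do not mention, is an elementary elliptic estimate for the model operator: since the metric error multiplies the second-order part $p^{-1}\nabla^{L_0^p}_{e_\ell}\nabla^{L_0^p}_{e_m}$, one needs $\sum_{k,\ell}\|p^{-1}\nabla^{L_0^p}_{e_k}\nabla^{L_0^p}_{e_\ell}R^{(x_0)}_p(\lambda)\|\le C\,d(\lambda,\Sigma)^{-1}$, not merely $\|R^{(x_0)}_p(\lambda)\|\le d(\lambda,\Sigma)^{-1}$. This comes from a short commutator computation using $[\nabla^{L_0^p}_{e_j},\nabla^{L_0^p}_{e_k}]=pR_{jk}$ with constant $R_{jk}$, together with $\|p^{-1}\Delta^{L_0^p}R^{(x_0)}_p(\lambda)\|\le C\,d(\lambda,\Sigma)^{-1}$.
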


The set $\Sigma$ is a closed subset of $\RR$, which can be represented as the union of the closed intervals (bands):
\[
\Sigma=\bigcup_{\mathbf k\in\ZZ_+^n, \mu=1,\ldots,r}[\alpha_{\mathbf k,\mu}, \beta_{\mathbf k,\mu}]
\]
where, for any $\mathbf k\in\ZZ_+^n$ and $\mu=1,\ldots,r$, the interval $[\alpha_{\mathbf k,\mu}, \beta_{\mathbf k,\mu}]$ is the range of the function $\Lambda_{\mathbf k,\mu}$ on $X$: $[\alpha_{\mathbf k,\mu}, \beta_{\mathbf k,\mu}]=\{\Lambda_{\mathbf k,\mu}({x_0}) : x_0\in X\}$.

In general, the bands $[\alpha_{\mathbf k,\mu}, \beta_{\mathbf k,\mu}]$ can overlap without any gaps and $\Sigma$ is the semi-axis $[\Lambda_0,+\infty)$ with $\Lambda_0=\inf_{x\in X} \Lambda_0(x)$. In this case, Theorem \ref{t:spectrum} tells nothing about the location of the spectrum of $H_{p}$, except for the lower bound for its bottom $\lambda_0(H_p)=\inf \sigma(H_p)$: 
\begin{equation}\label{e:lower-lambda0}
\lambda_0(H_p)\geq \Lambda_0-cp^{-1/4}, \quad p\in \NN.
\end{equation}
This estimate agrees with the similar estimate for the magnetic Laplacian obtained in \cite[Theorem 3.1]{HM96} (see also \cite{Morin19}). One should also emphasize that we make no assumptions on the curvature $\mathbf B$ except for full-rank condition. There is a number of papers devoted to the study of the asymptotic behavior of low-lying eigenvalues of the magnetic Schr\"odinger operator under some additional assumptions on the magnetic field like the existence of non-degenerate magnetic wells (see \cite{HK14,Morin19,raymond-book} and references therein for the case of non-degenerate magnetic field).

In some cases, $\Sigma$ has gaps: $[\Lambda_0,+\infty)\setminus \Sigma\not=\emptyset$. For instance, if $V(x)\equiv 0$ and the functions $a_j$ can be chosen to be constants:
\begin{equation}\label{e:aj-constant}
a_j(x)\equiv a_j, \quad x\in X, \quad  j=1,\ldots,n,
\end{equation}
 then $\Sigma$ is a countable discrete set.
In particular, if $J=\frac{1}{2\pi}B$ is an almost-complex structure (the almost K\"ahler case) and $V(x)\equiv 0$,, then $a_j=2\pi,  j=1,\ldots,n$ and
\begin{equation}\label{e:Kaehler}
\Sigma=\left\{2\pi (2k+n)\,:\, k\in\ZZ_+\right\}.
\end{equation}
The set $\Sigma$ may also have gaps if the functions $a_j$ are not  constants, but varies slow enough. In these cases, Theorem \ref{t:spectrum} implies the existence of gaps in  the spectrum of $H_{p}$. In particular, when $V(x)\equiv 0$ and condition \eqref{e:aj-constant} holds, then the spectrum of $H_{p}$ is contained in the union of neighborhoods of $a_j$'s of size $O(p^{-1/4})$. In the almost-K\"ahler case, Theorem \ref{t:spectrum} was proved in \cite{FT}. Our proof uses some ideas and constructions of the proof given in \cite{FT}, but it has some improvements and is shorter. 

The following example demonstrates what kind of information about the eigenvalues of $H_p$ can be obtained from Theorem \ref{t:spectrum} in the alomst-K\"ahler case.   

\begin{ex}
Suppose that $X$ is the unit two-sphere $S^2=\{(x,y,z)\in \RR^3:x^2+y^2+z^2=1\}$. 
In the spherical coordinates $x=\sin\theta \cos\varphi, y=\sin\theta \sin\varphi, z=\cos\theta,\theta\in (0,\pi), \varphi\in (0,2\pi)$, we take the Riemannian metric $g=R^2(d\theta^2+\sin^2\theta d\varphi^2)$, and the magnetic field $\mathbf B=\frac{1}{2}\sin\theta d\theta\wedge d\varphi$. Let $L$ be the corresponding quantum line bundle. 
The only eigenvalue $a_1(\theta, \varphi)$ can be found from the relation $\mathbf B=a_1dv_X$, which gives
\[ 
a_1(\theta, \varphi)=\frac{1}{2R^2}, \quad \Sigma=\left\{ (2k+1)\frac{1}{2R^2}\,:\, k\in\ZZ_+ \right\}.
\]
By the classical formula for the eigenvalues of the magnetic Laplacian $\Delta^{L^p}$  [Tamm (1931), Wu-Yang (1976)], the eigenvalues of $H_p=\frac 1p\Delta^{L^p}$ are given by
\[
\nu_{p,k}=(2k+1)\frac{1}{2R^2}+\frac{k(k+1)}{R^2p}, \quad  k\in\ZZ_+
\]
with multiplicity $m_{p,k}=p+2k+1$.
In particular, if the metric $g$ is K\"ahler, then $\frac{1}{2\pi}\mathbf B=dv_X$, which gives $R^2=\frac{1}{4\pi}$ and (cf. \eqref{e:Kaehler}) 
\[
\nu_{p,k}=2\pi (2k+1)+4\pi k(k+1)\frac 1p, \quad  k\in\ZZ_+.
\]
So we see that, in this example, Theorem \ref{t:spectrum} describes the leading term in the asymptotic expansion of each eigenvalue. Note that a description of the leading term in the asymptotic expansion of the multiplicites is given by Demailly's theorem (see \eqref{e:Demailly} below).
\end{ex}

Another way to obtain an operator $H_p$ with a gap in $\Sigma$ is to take 
$V(x)=-\tau(x)$ with $\tau(x):=\sum_{j=1}^n a_j(x), x\in X$. Then $H_p=\frac{1}{p}\Delta_p$, where  $\Delta_p:=\Delta^{L^p\otimes E}-p\tau$ is the renormalized Bochner Laplacian introduced by Guillemin and Uribe in \cite{Gu-Uribe}.  We get $\Lambda_0(x)\equiv 0$ and $\Sigma$ has a gap around zero: $\Sigma=\{0\}\cup [2b_0, \infty)$ with $b_0=\inf_{x\in X}|B_x|>0$. In this case, a better estimate for the spectrum of $H_p$ (with $p^{-1}$ instead of $p^{-1/4}$) holds true: there exists $c>0$ such that for any $p\in \NN$ the spectrum of $H_{p}$  is  contained in $(-cp^{-1}, cp^{-1})\cup [2b_0-cp^{-1}, \infty)$. This  estimate (with not precised constant $h_0$) is proved in \cite{Gu-Uribe} when $X$ is compact and $E$ is a  trivial line bundle. It was proved for a general vector bundle $E$ on a compact manifold in \cite[Corollary 1.2]{ma-ma02} and for manifolds of bounded geometry in \cite[Theorem 1.1]{ko-ma-ma} (see also the references therein for some related works). In particular, the estimate \eqref{e:lower-lambda0} holds with $p^{-1}$ instead of $p^{-1/4}$ (cf. \cite[Remark 2.3]{HM96} in the case of the magnetic Laplacian).

By constructing approximate eigenfunctions, one can show that each $\Lambda\in \Sigma$ is close to the spectrum of $H_p$ (cf. \cite[Theorem 2.2]{HM96}  in the case of the magnetic Laplacian). Actually, when $X$ os compact, any neighborhood of $\Lambda$ contains infinitely many eigenvalues as follows from the asymptotic formula for the eigenvalue distribution function of the operator  $H_p$ proved by Demailly \cite{Demailly85,Demailly91}. Let us briefly recall this result.

Suppose that $X$ is compact, The eigenvalue distribution function $N_p(\lambda)$ of $H_p$ is defined by
\[
N_p(\lambda)=\#\{j\in \ZZ_+: \nu_{p,j}\leq \lambda \},\quad \lambda\in \RR, 
\]
where $\nu_{p,j}, j\in \ZZ_+$ are the eigenvalues of $H_p$ taken with multiplicities. For any $x\in X$, let $N(x,\lambda)$ be the eigenvalue distribution function of the model operator $\mathcal H^{(x)}_{p}$ defined by
\[
N(x,\lambda)=\#\{(\mathbf k, \mu) : \Lambda_{\mathbf k,\mu}(x)\leq \lambda \},\quad \lambda\in \RR.
\]
By \cite[Theorem 0.6]{Demailly85} (see also \cite[Corollary 3.3]{Demailly91}), there exists a countable set $\mathcal D\subset \RR$ such that for any $\lambda\in \RR\setminus \mathcal D$  
\begin{equation}\label{e:Demailly1}
\lim_{p\to +\infty}p^{-n}N_p(\lambda)=\frac{1}{(2\pi)^n} \int_X \left(\prod_{j=1}^n a_j(x)\right) N(x,\lambda) dv_X(x),
\end{equation}
where $dv_X$ denotes the Riemannian volume form. The formula \eqref{e:Demailly1} can be rewritten in terms of the Liouville volume form $\Omega_{\mathbf B}=\frac{1}{n!} \mathbf B^n$ as follows:
\begin{equation}\label{e:Demailly2}
\lim_{p\to +\infty}p^{-n}N_p(\lambda)=\frac{1}{(2\pi)^n} \int_X N(x,\lambda)\Omega_{\mathbf B}(x).  
\end{equation}
By \eqref{e:Demailly2}, for any interval $(\alpha,\beta)$, we have
\begin{multline}\label{e:Demailly}
\#\{j\in \ZZ_+: \nu_{p,j}\in (\alpha,\beta)\}\\ =\frac{p^{n}}{(2\pi)^n} \sum_{\mathbf k, \mu}\mu_{\mathbf B}(\{x\in X : \Lambda_{\mathbf k,\mu}(x)\in (\alpha,\beta)\})+o(p^{n}), \quad  p\to \infty,
\end{multline}
where $\mu_{\mathbf B}$ denotes the Liouville measure. In particular, if $(\alpha,\beta)\cap \Sigma=\emptyset$, 
\[
\#\{j\in \ZZ_+: \nu_{p,j}\in (\alpha,\beta)\}=o(p^{n}), \quad p\to \infty.  
\]

Apparently, Theorem \ref{t:spectrum} holds in the case when the magnetic field degenerates. If $\mathbf B_{x_0}$ is degenerate for some $x_0\in X$, the model operator $\mathcal H^{(x_0)}_{p}$ is still well defined, but its spectrum is the semi-axis $[V(x_0),\infty)$. Then again, Theorem \ref{t:spectrum} contains  no information about the location of the spectrum of $H_{p}$, except the lower bound for its bottom. Therefore, we restrict ourselves to the case when $\mathbf B$ is non-degenerate.

\subsection{Asymptotic behavior of the spectral projection}
Consider an interval $I=(\alpha,\beta)$ such that $\alpha,\beta\not \in \Sigma$. 
By Theorem \ref{t:spectrum}, there exists $\mu_0>0$ and $p_0\in \NN$ such that for any $p>p_0$ 
\[
\sigma(H_{p})\subset (-\infty, \alpha-\mu_0) \cup I \cup (\beta+\mu_0, \infty).
\] 
Let $P_{p,I}$ be the spectral projection of the operator $H_{p}$ associated with $I$  and $P_{p,I}(x,x^\prime)$, $x,x^\prime\in X$, be its smooth kernel with respect to the Riemannian volume form $dv_X$. We study the asymptotic behavior of the kernel $P_{p,I}(x,x^\prime)$ as $p\to \infty$.

First, we establish the off-diagonal exponential estimate for $P_{p,I}(x,x^\prime)$.


\begin{thm}\label{t:exp-Pp}
There exists $c>0$ such that for any $k\in \mathbb N$, there exists $C_k>0$ such that for any $p\in \mathbb N$, $x, x^\prime \in X$, we have
\begin{equation}\label{e1.9}
\big|P_{p,I}(x, x^\prime)\big|_{{C}^k}\leq C_k p^{n+\frac{k}{2}}
e^{-c\sqrt{p} \,d(x, x^\prime)}.
\end{equation}
\end{thm}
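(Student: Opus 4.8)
The plan is to deduce the off-diagonal exponential decay of $P_{p,I}(x,x')$ from the standard Agmon--Combes--Thomas machinery adapted to the semiclassical parameter $\hbar = p^{-1}$, exploiting that $I$ is separated from the rest of $\sigma(H_p)$ by a gap of fixed width $\mu_0$ (uniform in $p$ by Theorem \ref{t:spectrum}). First I would write the spectral projection as a contour integral
\[
P_{p,I}=\frac{1}{2\pi i}\oint_{\Gamma}(H_p-z)^{-1}\,dz,
\]
where $\Gamma$ is a fixed circle (independent of $p$) in the complex plane enclosing $I$ and staying at distance $\geq \mu_0/2$ from $\sigma(H_p)$ for $p$ large. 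The key step is a weighted resolvent estimate: for a $1$-Lipschitz function $\varphi$ on $X$ and the conjugated operator $H_p^{\varphi}:=e^{\sqrt{p}\,\varphi}H_pe^{-\sqrt{p}\,\varphi}$, one computes
\[
H_p^{\varphi}=H_p-\big|\nabla\varphi\big|^2+\sqrt{p}\,\big(\text{first order, controlled by }|\nabla\varphi|\big)\cdot\frac{1}{\sqrt p},
\]
so that $\operatorname{Re}\langle (H_p^{\varphi}-z)u,u\rangle \geq (\operatorname{dist}(z,\sigma(H_p))-\|\nabla\varphi\|_\infty^2 - o(1))\|u\|^2$; choosing $\|\nabla\varphi\|_\infty^2 < \mu_0/4$ (i.e. rescaling $\varphi \mapsto \delta\varphi$ for small fixed $\delta>0$) gives $\|(H_p^{\varphi}-z)^{-1}\|\leq C$ uniformly for $z\in\Gamma$ and $p$ large. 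Integrating over $\Gamma$ yields $\|e^{\sqrt{p}\,\varphi}P_{p,I}e^{-\sqrt{p}\,\varphi}\|_{L^2\to L^2}\leq C$.

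Next I would upgrade this $L^2$ operator bound to a pointwise kernel bound via Sobolev embedding and elliptic estimates. Since $H_p = \frac1p\Delta^{L^p\otimes E}+V$, the rescaled operator $p^{-1}\Delta^{L^p\otimes E}$ on balls of radius $\sim p^{-1/2}$ (the natural semiclassical scale) behaves like a uniformly elliptic operator after the dilation $Z\mapsto p^{-1/2}Z$; using bounded geometry and interior elliptic regularity one gets, for the localized resolvent, uniform control of $C^k$ norms at the cost of factors $p^{k/2}$ from differentiating in the original (unscaled) variable. Combining with the weighted $L^2$ bound and testing against bump functions supported in $p^{-1/2}$-balls around $x$ and $x'$ produces
\[
\big|P_{p,I}(x,x')\big|_{C^k}\leq C_k\,p^{n+\frac k2}\,e^{-\sqrt{p}\,(\varphi(x)-\varphi(x'))}
\]
for every $1$-Lipschitz $\varphi$ with small gradient; the factor $p^n$ is the expected semiclassical volume normalization $=\hbar^{-n}$ coming from the kernel of a projection that is (locally) $O(1)$ in trace per unit Liouville volume, cf. Demailly's formula \eqref{e:Demailly2}. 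Finally, taking $\varphi(y)=\delta\min(d(y,x'),\,M)$ and letting $M\to\infty$, then absorbing $\delta$ into the constant $c$, gives \eqref{e1.9}.

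The main obstacle will be making the weighted resolvent estimate genuinely uniform on a manifold of bounded geometry and, simultaneously, tracking the $\sqrt p$ powers correctly so that only $p^{k/2}$ (not a worse power) appears in the $C^k$-bound. Concretely one must verify that the first-order commutator term $[\sqrt p\,\varphi, H_p]$ really is $O(\|\nabla\varphi\|_\infty\cdot\sqrt p \cdot \hbar\cdot p^{1/2}) = O(\|\nabla\varphi\|_\infty)$ in the relevant norm — i.e. that it does not blow up with $p$ — which works precisely because the connection $\nabla^{L^p\otimes E}$ enters $H_p$ with the prefactor $1/p$, so $[\varphi,H_p]=\frac1p(\nabla\varphi\cdot\nabla^{L^p\otimes E}+\text{h.c.})$ and the $\sqrt p$ conjugation balances against the $1/p$. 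A secondary technical point is passing from $L^2$ boundedness of $e^{\sqrt p\varphi}P_{p,I}e^{-\sqrt p\varphi}$ to a pointwise kernel estimate uniformly over all pairs $(x,x')$ using only bounded geometry; this is handled by the usual rescaling-and-elliptic-regularity argument (as in the near-diagonal analysis that underlies Theorem \ref{t:exp-Pp}'s companion expansion), but the bookkeeping of constants depending only on the geometric bounds, not on $x$, must be done with care.
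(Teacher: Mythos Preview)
Your proposal is correct and follows essentially the same Combes--Thomas/Agmon strategy as the paper: represent $P_{p,I}$ by a contour integral, prove uniform bounds for the resolvent conjugated by $e^{\alpha\Phi_p}$ with $|\alpha|<c_0\sqrt{p}$ and $\Phi_p$ a smoothed distance function (this is the paper's Theorem~\ref{Thm1.9a}), and then pass to pointwise $C^k$ kernel bounds with the $p^{n+k/2}$ factor via Sobolev embedding adapted to the semiclassical scale. The only notable implementation difference is that the paper uses the iterated-resolvent formula $P_{p,I}=\frac{1}{2\pi i}\int_\Gamma \lambda^{m-1}(\lambda-H_p)^{-m}\,d\lambda$ together with the $H^{-m}\to H^{m}$ mapping estimates and Propositions~\ref{p:Sobolev}--\ref{p:delta} to go pointwise, whereas you outline a localized elliptic-regularity-after-rescaling argument; both routes work and yield the same powers of $p$.
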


Here $d(x,x^\prime)$ is the geodesic distance and $|P_{p,I}(x, x^\prime)|_{{C}^k}$ denotes the pointwise ${C}^k$-seminorm of the section $P_{p,I}$ at a point $(x, x^\prime)\in X\times X$, which is the sum of the norms induced by $h^L, h^E$ and $g$ of the derivatives up to order $k$ of $P_{p,I}$ with respect to the connection $\nabla^{L^p\otimes E}$ and the Levi-Civita connection $\nabla^{TX}$ evaluated at $(x, x^\prime)$.

Then we describe an asymptotic expansion of  $P_{p,I}$ as $p\to \infty$ in a fixed neighborhood of the diagonal (independent of $p$). Such kind of expansion is called full off-diagonal expansion.

First, we introduce normal coordinates near an arbitrary point $x_0\in X$. 
We denote by $B^{X}(x_0,r)$ and $B^{T_{x_0}X}(0,r)$ the open balls in $X$ and $T_{x_0}X$ with center $x_0$ and radius $r$, respectively. We identify $B^{T_{x_0}X}(0,r_X)$ with $B^{X}(x_0,r_X)$ via the exponential map $\exp^X_{x_0}$. Furthermore, we choose trivializations of the bundles $L$ and $E$ over $B^{X}(x_0,r_X)$,   identifying their fibers $L_Z$ and $E_Z$ at $Z\in B^{T_{x_0}X}(0,r_X)\cong B^{X}(x_0,r_X)$ with the spaces  $L_{x_0}$ and $E_{x_0}$ by parallel transport with respect to the connections $\nabla^L$ and $\nabla^E$ along the curve $\gamma_Z : [0,1]\ni u \to \exp^X_{x_0}(uZ)$. Denote by $\nabla^{L^p\otimes E}$ and $h^{L^p\otimes E}$ the connection and the Hermitian metric on the trivial bundle with fiber $(L^p\otimes E)_{x_0}$ induced by these trivializations.  

We choose an orthonormal base $\{e_j : j=1,\ldots,2n\}$ in $T_{x_0}X$ such that  
\begin{equation}\label{e:obase}
B_{x_0}e_{2k-1}=a_k(x_0)e_{2k}, \quad B_{x_0}e_{2k}=-a_k(x_0)e_{2k-1},\quad k=1,\ldots,n. 
\end{equation}
It gives rise to a coordinate chart $\gamma_{x_0} : B(0,c)\subset  \RR^{2n}\to X$  defined on the open ball $B(0,c)$ in $\RR^{2n}$ with center at the origin and radius $c\in (0,r_X)$, which is given by the restriction of the exponential map $\exp_{x_0}^X : T_{x_0}X \to X$ composed with the linear isomorphism $\mathbb R^{2n}\to T_{x_0}X$ determined by the base $\{e_j \}$.

Let $dv_{TX}$ denote the Riemannian volume form of the Euclidean space $(T_{x_0}X, g_{x_0})$. We define a smooth function $\kappa$ on $B^{T_{x_0}X}(0,r_X)\cong B^{X}(x_0,r_X)$ by the equation
\[
dv_{X}(Z)=\kappa(Z)dv_{TX}(Z), \quad Z\in B^{T_{x_0}X}(0,r_X). 
\] 
Consider the fiberwise product $TX\times_X TX=\{(Z,Z^\prime)\in T_{x_0}X\times T_{x_0}X : x_0\in X\}$. Let $\pi : TX\times_X TX\to X$ be the natural projection given by  $\pi(Z,Z^\prime)=x_0$. The kernel $P_{p,I}(x,x^\prime)$ induces a smooth section $P_{p,I,x_0}(Z,Z^\prime)$ of the vector bundle $\pi^*(\operatorname{End}(E))$ on $TX\times_X TX$ defined for all $x_0\in X$ and $Z,Z^\prime\in T_{x_0}X$ with $|Z|, |Z^\prime|<r_X$. 

\begin{thm}\label{t:main}
There exists $\varepsilon\in (0,r_X)$ such that for any $x_0\in X$ and $Z,Z^\prime\in T_{x_0}X$, $|Z|, |Z^\prime|<\varepsilon$, the sequence $P_{p,I,x_0}(Z,Z^\prime)$ admits an asymptotic expansion
\begin{equation}\label{e:main-expansion}
\frac{1}{p^n}P_{p,I,x_0}(Z,Z^\prime)\cong
\sum_{r=0}^\infty F_{r,x_0}(\sqrt{p} Z, \sqrt{p}Z^\prime)\kappa^{-\frac 12}(Z)\kappa^{-\frac 12}(Z^\prime)p^{-\frac{r}{2}}, 
\end{equation}
where the leading coefficient $F_{0,x_0}$ is the kernel of the spectral projection $\mathcal P_{I,x_0}$ of the model operator $\mathcal H^{(x_0)}:=\mathcal H^{(x_0)}_1$ associated with $I$:
\begin{equation}\label{e:F0}
F_{0,x_0}(Z,Z^\prime)=\mathcal P_{I, x_0}(Z,Z^\prime),
\end{equation}
and for any $r\geq 0$, the coefficients $F_{r,x_0}$ has the form
\begin{equation}\label{e:Fr}
F_{r,x_0}(Z,Z^\prime)=J_{r,x_0}(Z,Z^\prime)\mathcal P_{x_0}(Z,Z^\prime),
\end{equation}
where $\mathcal  P_{x_0}\in C^\infty(\RR^{2n}\times \RR^{2n})$ is the Bergman kernel  (see \eqref{e:Bergman} below) and $J_{r,x_0}(Z,Z^\prime)$ is a polynomial in $Z, Z^\prime$ with values in $\operatorname{End}(E_{x_0})$, depending smoothly on $x_0$, with the same parity as $r$ 
and $\operatorname{deg} J_{r,x_0}\leq \kappa(I)+3r$, where $\kappa(I)=\max \{|\mathbf k| : \Lambda_{\mathbf k,\mu}\in I\}$. 

For any $j\in \mathbb N$, the remainder 
\[
R_{j,p,x_0}(Z,Z^\prime):=\frac{1}{p^n}P_{p,I,x_0}(Z,Z^\prime)
-\sum_{r=0}^jF_{r,x_0}(\sqrt{p} Z, \sqrt{p}Z^\prime)\kappa^{-\frac 12}(Z)\kappa^{-\frac 12}(Z^\prime)p^{-\frac{r}{2}}
\]
in the asymptotic expansion \eqref{e:main-expansion} satisfies the following condition. For any $m,m^\prime\in \mathbb N$, there exist positive constants $C$, $c$, $c_0$ and $M$ such that for any $p\geq 1$, $x_0\in X$ and $Z,Z^\prime\in T_{x_0}X$, $|Z|, |Z^\prime|<\varepsilon$, 
\begin{multline}\label{e:main-exp}
\sup_{|\alpha|+|\alpha^\prime|\leq m}\Bigg|\frac{\partial^{|\alpha|+|\alpha^\prime|}}{\partial Z^\alpha\partial Z^{\prime\alpha^\prime}}R_{j,p,x_0}(Z,Z^\prime)\Bigg|_{C^{m^\prime}(X)}\\ 
\leq Cp^{-\frac{j-m+1}{2}}(1+\sqrt{p}|Z|+\sqrt{p}|Z^\prime|)^M\exp(-c\sqrt{p}|Z-Z^\prime|)+ O(e^{-c_0\sqrt{p}}).
\end{multline}
\end{thm}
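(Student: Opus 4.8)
The plan is to combine a localization step, which reduces the problem to a fixed coordinate ball around an arbitrary base point $x_0$, with a rescaling analysis of the resolvent of the corresponding model operator, following the scheme used for Bergman-kernel expansions by Dai--Liu--Ma and Ma--Marinescu and adapting it to the Bochner--Schr\"odinger operator.

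\emph{Localization.} Since $\alpha,\beta\notin\Sigma$ and $\Sigma$ is closed, $\Sigma\cap I$ is compact and bounded away from $\{\alpha,\beta\}$; by Theorem~\ref{t:spectrum} the same holds for $\sigma(H_p)\cap I$ when $p$ is large, uniformly. Fix a bounded contour $\Gamma\subset\CC$ that encloses $[\alpha+\mu_0,\beta-\mu_0]$ and stays at distance $\ge\mu_0/2$ from $\sigma(H_p)$ and from all model spectra $\Sigma_{x_0}$, so that $P_{p,I}=\frac{1}{2\pi i}\oint_\Gamma(\lambda-H_p)^{-1}\,d\lambda$ and $\mathcal P_{I,x_0}=\frac{1}{2\pi i}\oint_\Gamma(\lambda-\mathcal H^{(x_0)})^{-1}\,d\lambda$. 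For a fixed $x_0\in X$, transplant $H_p$ to the chart $\gamma_{x_0}$ and, using bounded geometry and the explicit model spectrum, extend its coefficients off $B(0,c)$ to a self-adjoint operator $\widetilde H_{p,x_0}$ on $\RR^{2n}$ that agrees with $H_p$ on $B(0,c/2)$ and for which $\Gamma$ still avoids $\sigma(\widetilde H_{p,x_0})$, uniformly in $x_0$ and $p$. By the Combes--Thomas (Agmon-type) estimate underlying Theorem~\ref{t:exp-Pp}, the kernels of $(\lambda-H_p)^{-1}$ and $(\lambda-\widetilde H_{p,x_0})^{-1}$ both decay like $e^{-c\sqrt p\,d(\cdot,\cdot)}$ and, being built from the same operator near $x_0$, differ by $O(e^{-c_0\sqrt p})$ in every $C^{m'}$-seminorm on $B(0,\varepsilon)\times B(0,\varepsilon)$; hence so do $P_{p,I}$ and $\widetilde P_{p,I,x_0}:=\frac{1}{2\pi i}\oint_\Gamma(\lambda-\widetilde H_{p,x_0})^{-1}\,d\lambda$. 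This produces the $O(e^{-c_0\sqrt p})$ term in \eqref{e:main-exp} and reduces the theorem to an expansion of $\widetilde P_{p,I,x_0}$ near the origin.

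\emph{Rescaling and resolvent expansion.} Conjugating by $\kappa^{1/2}$ and applying the dilation $(S_tu)(Z)=u(Z/t)$ with $t=p^{-1/2}$ yields a family $\mathcal L_{t,x_0}:=S_t^{-1}\kappa^{1/2}\widetilde H_{p,x_0}\kappa^{-1/2}S_t$ of operators on $\RR^{2n}$, formally self-adjoint for the Euclidean metric, uniformly elliptic for $t\in(0,1]$, invariant under the joint reflection $(Z,t)\mapsto(-Z,-t)$, and admitting a Taylor expansion $\mathcal L_{t,x_0}=\sum_{i\ge0}t^i\mathcal A_{i,x_0}$ by differential operators with polynomial coefficients (from Taylor expanding $g$, $\nabla^L$, $\nabla^E$, $V$ and $\kappa$ at $x_0$), with $\mathcal A_{0,x_0}$ unitarily equivalent to $\mathcal H^{(x_0)}$. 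Then $S_t^{-1}\kappa^{1/2}\widetilde P_{p,I,x_0}\kappa^{-1/2}S_t=\frac{1}{2\pi i}\oint_\Gamma(\lambda-\mathcal L_{t,x_0})^{-1}\,d\lambda$, and iterating $(\lambda-\mathcal L_{t,x_0})^{-1}=(\lambda-\mathcal A_{0,x_0})^{-1}+\sum_{i\ge1}t^i(\lambda-\mathcal A_{0,x_0})^{-1}\mathcal A_{i,x_0}(\lambda-\mathcal L_{t,x_0})^{-1}$ and integrating in $\lambda$ produces a formal expansion $\sum_{r\ge0}t^r\mathcal F_{r,x_0}$ with $\mathcal F_{0,x_0}=\mathcal P_{I,x_0}$. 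Undoing $S_t$ and $\kappa^{1/2}$ and reading off kernels gives \eqref{e:main-expansion} and \eqref{e:F0}. The form \eqref{e:Fr} follows because the kernel of $(\lambda-\mathcal A_{0,x_0})^{-1}$ lies in the space of ($\operatorname{End}(E_{x_0})$-valued polynomial)$\times$(Gaussian Bergman factor $\mathcal P_{x_0}$) kernels — a space stable under composition with the polynomial-coefficient operators $\mathcal A_{i,x_0}$ and under the contour integration — and tracking parity (each $\mathcal A_{i,x_0}$ reverses it $i$ times, by the reflection invariance) together with a degree count yields the stated parity of $J_{r,x_0}$ and the bound $\deg J_{r,x_0}\le\kappa(I)+3r$.

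\emph{Remainder estimates and main difficulty.} To upgrade this to a genuine asymptotic expansion one needs, uniformly in $t\in(0,1]$ and $x_0\in X$: (i) bounds on $(\lambda-\mathcal L_{t,x_0})^{-1}$ and on the iterated products above in the scale of weighted Sobolev norms modelled on $\sum_k\|(1+|Z|)^{N}(\mathcal A_{0,x_0}+1)^{k}\,\cdot\,\|_{L^2}$, obtained from G\aa rding-type elliptic estimates for $\mathcal L_{t,x_0}$ together with the harmonic-oscillator structure of $\mathcal A_{0,x_0}$; (ii) conversion of the resulting $L^2$-bounds on the remainder $R_{j,p,x_0}$ into pointwise bounds, via Sobolev embedding after undoing the rescaling, where each of the $m$ fibre derivatives $\partial_Z,\partial_{Z'}$ contributes a factor $\sqrt p$ (whence the power $p^{-(j-m+1)/2}$ and the polynomial weight $(1+\sqrt p|Z|+\sqrt p|Z'|)^M$), while the $C^{m'}$-control in $x_0$ costs no power of $p$; and (iii) the exponential factor $e^{-c\sqrt p|Z-Z'|}$, obtained by conjugating $(\lambda-\mathcal L_{t,x_0})^{-1}$ by $e^{\pm\psi}$ with $\psi$ a bounded-gradient regularization of $c|Z-Z'|$ and re-running (i)--(ii). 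Combining with the $O(e^{-c_0\sqrt p})$ error from localization then gives \eqref{e:main-exp}. The main obstacle is to carry out (i)--(iii) with all constants independent of the base point $x_0$ — which is exactly where the bounded-geometry hypotheses are indispensable, as they make the Taylor coefficients $\mathcal A_{i,x_0}$, the model data $a_j(x_0)$, $V_\mu(x_0)$, and the chart distortion $\kappa$ uniformly controlled — while keeping the three scalings (the $t$-expansion, the polynomial weight in $\sqrt p|Z|$, and the exponential weight in $\sqrt p|Z-Z'|$) simultaneously compatible throughout the iterated resolvent identity.
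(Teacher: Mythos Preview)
Your plan is essentially the paper's: localize to a model operator on $\RR^{2n}$ with an $O(e^{-c_0\sqrt p})$ error, conjugate by $\kappa^{1/2}$ and rescale by $S_t$ with $t=p^{-1/2}$, expand the resolvent as a formal series in $t$, contour-integrate, and control the remainder via weighted Sobolev estimates uniform in $x_0$ by bounded geometry. Two points, however, need repair.

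First, your justification of \eqref{e:Fr} is wrong as stated: the kernel of the full resolvent $(\lambda-\mathcal A_{0,x_0})^{-1}$ is \emph{not} of the form (polynomial)$\times\mathcal P_{x_0}$ --- it is an infinite Mehler-type sum over all Landau levels. The paper instead writes $(\lambda-\mathcal H^{(0)})^{-1}=\sum_{\Lambda_{\mathbf k,\mu}\in I}(\lambda-\Lambda_{\mathbf k,\mu})^{-1}\mathcal P_{\Lambda_{\mathbf k,\mu}}+(\lambda-\mathcal H^{(0)})^{-1}\mathcal P^\bot_I$. Only the first piece has kernel (polynomial of degree $\le\kappa(I)$)$\times\mathcal P_{x_0}$; the second is holomorphic in $\lambda$ inside $\Gamma$, so any term of the iterated product built exclusively from second-type factors dies under $\oint_\Gamma$. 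What survives contains at least one $\mathcal P_{\Lambda_{\mathbf k,\mu}}$, and then one checks that the algebra $\mathcal A$ of (polynomial)$\times\mathcal P_{x_0}$ kernels is stable under each $\mathcal H^{(j)}$ (degree $\uparrow$ by at most $j+2$) and under $(\lambda-\mathcal H^{(0)})^{-1}\mathcal P^\bot_I$ (degree preserved). This is what produces the bound $\deg J_{r,x_0}\le\kappa(I)+3r$; your argument, taken literally, would give no degree bound at all.

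Second, the extension $\widetilde H_{p,x_0}$ cannot be chosen arbitrarily. The paper builds it so that the curvature eigenvalues at $Z$ are exactly $a_j(\varphi_\varepsilon(Z))$, forcing the set $\Sigma$ for the extended operator to sit inside $\Sigma$ for $H_p$; this is precisely what guarantees $\alpha,\beta\notin\Sigma$ for $\widetilde H_{p,x_0}$ and hence that $\Gamma$ avoids $\sigma(\widetilde H_{p,x_0})$ uniformly in $x_0$. A generic cutoff extension of the metric and connection need not preserve this spectral gap.
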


Here $C^{m^\prime}(X)$ is the $C^{m^\prime}$-norm for the parameter $x_0\in X$. We say that $G_p=O(p^{-\infty})$ if for any $l, l_1\in \mathbb N$, there exists $C_{l,l_1}>0$ such that $C^{l_1}$-norm of $G_p$ is estimated from above by $C_{l,l_1}p^{-l}$. 

The spectral projection $\mathcal P_{I,x_0}$ can be written as
\begin{equation}
\label{e:Lambda-Bergman}
\mathcal P_{I,x_0}=\sum_{(\mathbf k,\mu) : \Lambda_{\mathbf k,\mu}\in I} \mathcal P_{\Lambda_{\mathbf k,\mu},x_0},
\end{equation}
where $\mathcal P_{\Lambda_{\mathbf k,\mu},x_0}$ is the orthogonal projection to the eigenspace of the model operator $\mathcal H^{(x_0)}$ with the eigenvalue $\Lambda_{\mathbf k,\mu}$. One can give an explicit formula for its smooth Schwartz kernel in terms of the Laguerre polynomials. For the lowest eigenvalue $\Lambda_0(x_0)$, the kernel of the projection $P_{\Lambda_0,x_0}$ has the form 
\[
\mathcal P_{0,x_0}(Z,Z^\prime)=\mathcal P_{x_0}(Z,Z^\prime)\pi_{0,x_0},
\]
where $\mathcal P_{x_0}\in C^\infty(\RR^{2n}\times \RR^{2n})$ is the Bergman kernel given by
\begin{equation}
\label{e:Bergman}
\mathcal P_{x_0}(Z,Z^\prime)=\frac{1}{(2\pi)^n}\prod_{j=1}^na_j \exp\left(-\frac 14\sum_{k=1}^na_k(|z_k|^2+|z_k^\prime|^2- 2z_k\bar z_k^\prime) \right)
\end{equation}
and $\pi_{0,x_0}$ is the orthogonal projection in $E_{x_0}$ to the eigenspace of $V(x_0)$ associated with its lowest eigenvalue.

In the case when $H_p=\frac{1}{p}\Delta_p$, where  $\Delta_p$ is the renormalized Bochner Laplacian and $I=(\alpha,\beta)$ is any interval, which contains $0$, with $\beta<2b_0$, the projection $P_{p,I}$ is called the generalized Bergman projection in \cite{ma-ma08}, since it generalizes the Bergman projection on complex manifolds. Its kernel is called the generalized Bergman kernel. 

Asymptotic expansions of the Bergman kernels on complex manifolds were studied for a long time and have many applications (see \cite{ma-ma:book} and also the references therein for the previous results).

For the Bergman kernel of the spin$^c$ Dirac operator on a symplectic manifold of bounded geometry, the same type of exponential estimate as in Theorems \ref{t:exp-Pp} is proved in \cite[Theorem 1]{ma-ma15}, and for the Bergman kernel of the renormalized Bochner Laplacian on a symplectic manifold of bounded geometry in \cite[Theorem 1.3]{ko-ma-ma}.

The full off-diagonal expansion for the Bergman kernel of the spin$^c$ Dirac operator was proved in \cite[Theorem 4.18']{dai-liu-ma} (see also \cite[Theorem 4.2.1]{ma-ma:book}). For the generalized Bergman kernel associated with the renormalized Bochner Laplacian, it was shown in \cite[Theorem 1.19]{ma-ma08} ((see also \cite[Theorem 4.1.24]{ma-ma:book})) that the off-diagonal expansion holds in a neighborhood of size $1/\sqrt{p}$ of the diagonal. This is called near off-diagonal expansion. In \cite{lu-ma-ma} a less precise estimate than in \cite[Theorem 1.19]{ma-ma08} was obtained in a neighborhood of size $p^{-\theta}$, $\theta\in(0,1/2)$. The proofs are based on the spectral gap property of the Bochner Laplacian, finite propagation speed arguments  for the wave equation and rescaling of the Bochner Laplacian near the diagonal, which is inspired by the analytic localization technique of Bismut-Lebeau \cite{BL}. In \cite{Kor18}, we combined the methods of \cite{dai-liu-ma,ma-ma:book,ma-ma08}  with weighted estimates with appropriate exponential weights as in \cite{Kor91,Meladze-Shubin1,Meladze-Shubin2} to prove the full off-diagonal expansion for the generalized Bergman kernel. In \cite{ko-ma-ma},  these results were extended to the case of manifolds of bounded geometry. Theorem~\ref{t:main} is a rather straightforward extension of the results of \cite{Kor18,ko-ma-ma}. In a companion paper \cite{Kor20a}, we apply the results of the paper to construct a Berezin-Toeplitz quantization associated with higher Landau levels of the Bochner Laplacian on a symplectic manifold. 
We mention that, in two simultaneous papers \cite{charles20a,charles20b}, Charles studies the same subject, using different methods. 

The paper is organized as follows. In Section~\ref{s:description}, we prove Theorem \ref{t:spectrum}. In Section \ref{s:res-estimates}, we prove weighted estimates for the resolvent of the operator $H_p$.  Section \ref{s:projection} is devoted to the study of the kernel of the spectral projections and contains the proofs of Theorems \ref{t:exp-Pp} and \ref{t:main}.

This work was started as a joint project with L. Charles, but later we decided to work on our approaches separately. I would like to thank Laurent for his collaboration. 

 \section{Description of the spectrum}\label{s:description}
This section is devoted to the proof of Theorem \ref{t:spectrum}. 

\subsection{Approximate inverse}
The proof of Theorem \ref{t:spectrum} is based on a construction of an approximate inverse for the operator $H_{p}-\lambda$ with $\lambda \not\in \Sigma$. The corresponding statement is given in the following proposition. 

 \begin{prop}\label{p:Klambda}
There exists a family $\{Q_p(\lambda): \lambda\not\in \Sigma\}$ of operators in $C^\infty_c(X, L^p\otimes E)$, which extend to bounded operators in $L^2(X, L^p\otimes E)$, such that, for any $\lambda\not\in \Sigma$, we have
\[
\left(H_{p}-\lambda\right)Q_p(\lambda)u=u+K_p(\lambda)u,\quad u\in C_c^\infty(X, L^p\otimes E),
\]
where $\{K_p(\lambda): \lambda\not\in \Sigma\}$ is a family of bounded operators in $L^2(X,L^p\otimes E)$, satisfying the following condition. For any $K>0$, there exists $C_K>0$ such that for any $\lambda\not\in \Sigma$, $|\lambda|<K$ and for any $p\in \NN$, we have
\[
\|K_p(\lambda):L^2(X,L^p\otimes E)\to L^2(X,L^p\otimes E)\| \leq C_K p^{-1/4}d(\lambda,\Sigma)^{-1},
\]
where $d(\lambda,\Sigma)$ denotes the distance from $\lambda$ to $\Sigma$. 
\end{prop}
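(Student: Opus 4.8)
The plan is to construct $Q_p(\lambda)$ by patching together the resolvents of the model operators $\mathcal H^{(x)}_p$ over a partition of unity on the scale $p^{-1/4}$, and then to estimate the resulting defect $K_p(\lambda)$. First, using bounded geometry, for each $p$ I choose points $\{x_j\}\subset X$ so that the balls $B^X(x_j,\varepsilon_0p^{-1/4})$ cover $X$ while the balls $B^X(x_j,4\varepsilon_0p^{-1/4})$ have multiplicity bounded independently of $p$ (with $\varepsilon_0\in(0,r_X)$ fixed), together with a subordinate partition of unity $\{\phi_j\}$, $\sum_j\phi_j^2\equiv 1$, $\supp\phi_j\subset B^X(x_j,\varepsilon_0p^{-1/4})$, $|\nabla^m\phi_j|\le C_mp^{m/4}$. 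For each $j$ identify $B^X(x_j,r_X)$ with $B^{T_{x_j}X}(0,r_X)$ via $\exp^X_{x_j}$, trivialize $L$ and $E$ by parallel transport as in the introduction, and let $T_j$ be the resulting map (an isometry up to the density factor $\kappa^{\pm1/2}$) from sections supported in $\supp\phi_j$ onto compactly supported sections of $E_{x_j}$ over $T_{x_j}X$; write $H_{p,j}$ for the push-forward of $H_p$. Since $\sigma(\mathcal H^{(x_j)}_p)=\Sigma_{x_j}\subset\Sigma$, the resolvent $R_j(\lambda)=(\mathcal H^{(x_j)}_p-\lambda)^{-1}$ is bounded with $\|R_j(\lambda)\|\le d(\lambda,\Sigma_{x_j})^{-1}\le d(\lambda,\Sigma)^{-1}$, uniformly in $j$ and $p$. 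I then set
\[
Q_p(\lambda)u=\sum_j T_j^{-1}\big(\phi_j\,R_j(\lambda)\,T_j(\phi_j u)\big),\qquad u\in C^\infty_c(X,L^p\otimes E),
\]
the middle $\phi_j$ being read in the chart. With $w_j=R_j(\lambda)T_j(\phi_j u)$, bounded overlap gives $\|Q_p(\lambda)u\|^2\le C\sum_j\|w_j\|^2\le Cd(\lambda,\Sigma)^{-2}\sum_j\|\phi_j u\|^2\le C'd(\lambda,\Sigma)^{-2}\|u\|^2$, so $Q_p(\lambda)$ extends to a bounded operator.

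Since $H_p$ is local, $T_j$ intertwines $H_p$ with $H_{p,j}$ over $\supp\phi_j$; using $(\mathcal H^{(x_j)}_p-\lambda)w_j=T_j(\phi_j u)$, $\phi_j T_j(\phi_j u)=T_j(\phi_j^2u)$ and $\sum_j\phi_j^2\equiv1$, one obtains $(H_p-\lambda)Q_p(\lambda)u=u+K_p(\lambda)u$ with
\[
K_p(\lambda)u=\sum_j T_j^{-1}\Big([H_{p,j},\phi_j]\,w_j+\phi_j\big(H_{p,j}-\mathcal H^{(x_j)}_p\big)w_j\Big).
\]
By bounded overlap it suffices to bound each summand by $C_Kp^{-1/4}d(\lambda,\Sigma)^{-1}\|\phi_j u\|$, uniformly in $j$, $p$ and in $\lambda$ with $|\lambda|<K$. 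The key inputs are a priori estimates for $w_j$: from $\mathcal H^{(x_j)}_pw_j=\lambda w_j+T_j(\phi_j u)$, testing against $w_j$ and using that $d(\lambda,\Sigma)\le C_K$ for $|\lambda|<K$ (which follows from bounded geometry) gives $\|\nabla^{L^p_0}w_j\|\le C_K p^{1/2}d(\lambda,\Sigma)^{-1}\|\phi_j u\|$, and the magnetic Bochner identity — whose curvature term is $O\big(p\|\nabla^{L^p_0}w_j\|\,\|w_j\|+p^2\|w_j\|^2\big)$ — then gives $\tfrac1p\|(\nabla^{L^p_0})^2w_j\|\le C_Kd(\lambda,\Sigma)^{-1}\|\phi_j u\|$. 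For the commutator, $[H_{p,j},\phi_j]=-\tfrac1p\big(2\nabla^{L^p\otimes E}_{j,\operatorname{grad}\phi_j}+\Delta\phi_j\big)$, where the factor $\tfrac1p$ coming from $H_p=\tfrac1p\Delta^{L^p\otimes E}+V$ is decisive; hence $\|[H_{p,j},\phi_j]w_j\|\le C\tfrac1p\big(p^{1/4}\|\nabla^{L^p\otimes E}_jw_j\|+p^{1/2}\|w_j\|\big)\le C_K\big(p^{-1/4}+p^{-1/2}\big)d(\lambda,\Sigma)^{-1}\|\phi_j u\|$, which is of the required order.

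The remaining term $\phi_j(H_{p,j}-\mathcal H^{(x_j)}_p)w_j$ is the delicate one, and I expect it to be the main obstacle. In normal coordinates and radial gauge at $x_j$ one has $g=\delta+O(|Z|^2)$, the connection one-form equal to $\alpha+O(|Z|^2)$ (its linear part being exactly the model form $\alpha$), and $V=V(x_j)+O(|Z|)$; rescaling $Z=p^{-1/2}w$ turns the model into a $p$-independent operator and $\supp\phi_j$ into $\{|w|\le\varepsilon_0p^{1/4}\}$, and one writes $H_{p,j}-\mathcal H^{(x_j)}_p=\sum_{i\ge1}p^{-i/2}\mathcal O_{i,j}$ with each $\mathcal O_{i,j}$ of order $\le 2$ and $\operatorname{End}(E_{x_j})$-valued polynomial coefficients of controlled degree. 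The $V$-difference contributes $O(p^{-1/4})d(\lambda,\Sigma)^{-1}\|\phi_j u\|$ and the metric difference $O(p^{-1/2})d(\lambda,\Sigma)^{-1}\|\phi_j u\|$ by the a priori and elliptic bounds above; controlling the curvature/connection part at the level $p^{-1/4}$ uniformly in $x_j$ — by combining the weight $|w|\le\varepsilon_0p^{1/4}$ on $\supp\phi_j$ with those bounds and with the exponential localization of $R_j(\lambda)$ at scale $p^{-1/2}$ (which confines $w_j$ to $\{|Z|\le Cp^{-1/4}\}$), carrying the expansion to sufficiently high order — is the hard step, where the ideas of \cite{FT} and the weighted-estimate technique used later in the paper must be deployed carefully. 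Granting this, $\|K_p(\lambda)u\|^2\le C\sum_j\|v_j\|^2\le C\big(C_Kp^{-1/4}d(\lambda,\Sigma)^{-1}\big)^2\|u\|^2$, and a density argument yields the asserted bounds for $Q_p(\lambda)$ and $K_p(\lambda)$.
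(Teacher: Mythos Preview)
Your overall architecture matches the paper's---patch local model resolvents on balls of radius $p^{-1/4}$, bound the commutator with the cutoff, and bound the ``coefficient-freezing'' error $H_{p,j}-\mathcal H^{(x_j)}_p$---but there is a genuine gap in the last step, and the tools you list do not close it. In normal coordinates with the radial gauge you use, the connection one-form is $\alpha+\beta$ with $\beta=O(|Z|^2)$, so $H_{p,j}-\mathcal H^{(x_j)}_p$ contains the terms $-2\sum_k\beta_k\,\nabla^{L^p_0}_{e_k}$ and $-p\sum_k\beta_k^2$. On $\operatorname{supp}\phi_j$ one only has $|Z|\le \varepsilon_0p^{-1/4}$, hence $|\beta|\le Cp^{-1/2}$ and $p|\beta|^2\le Cp^{-1}\cdot p = C$; combined with your a priori bound $\|\nabla^{L^p_0}w_j\|\le C_Kp^{1/2}d(\lambda,\Sigma)^{-1}\|\phi_j u\|$ these give contributions of size $O(1)\,d(\lambda,\Sigma)^{-1}\|\phi_j u\|$, not $O(p^{-1/4})$. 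Rescaling $Z=p^{-1/2}w$ does not help: the bad term becomes $p^{-1/2}|w|^2\nabla^{(0)}$, and on $|w|\le \varepsilon_0p^{1/4}$ this is again $O(1)$. The exponential localization of $R_j(\lambda)$ at scale $p^{-1/2}$ only confirms that $w_j$ is essentially supported where $|Z|\lesssim p^{-1/4}$, which is exactly the regime producing the $O(1)$ bound; carrying the Taylor expansion to higher order cannot shrink the \emph{leading} $O(|Z|^2)$ term.

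The paper avoids this obstruction by \emph{not} using normal coordinates for this step. It first passes to Darboux charts $\varkappa_{x_0}$ in which $\varkappa_{x_0}^*\mathbf B$ is the constant form $\sum_k a_k(x_0)\,dZ_{2k-1}\wedge dZ_{2k}$ (uniformly in $x_0$ by bounded geometry and Moser's argument), and then chooses a trivialization of $L$ in which the connection one-form equals $\alpha$ \emph{exactly}. In that frame the covariant derivative in the chart is literally $\nabla^{L^p_0}+\Gamma^E$, so the difference $H_p^{(x_0)}-\mathcal H^{(x_0)}_p$ contains \emph{no} connection-correction term at all: only $(g_{x_0}^{\ell m}-\delta^{\ell m})\tfrac1p\nabla^{L^p_0}_{e_\ell}\nabla^{L^p_0}_{e_m}$, first-order terms with $O(p^{-1/2})$ coefficients, $V_{x_0}-V_{x_0}(0)$, and $O(p^{-1})$. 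Since $g_{x_0}(0)=\delta$, on $|Z|\le 2p^{-1/4}$ one has $|g_{x_0}^{\ell m}-\delta^{\ell m}|\le Cp^{-1/4}$, and together with the elliptic estimate $\sum_{k,\ell}\|\tfrac1p\nabla^{L^p_0}_{e_k}\nabla^{L^p_0}_{e_\ell}R^{(x_0)}_p(\lambda)\|\le Cd(\lambda,\Sigma)^{-1}$ this yields the desired $O(p^{-1/4})$. So the missing idea in your argument is precisely this change of coordinates/gauge that kills $\beta$; once you insert it, the rest of your outline goes through essentially as written.
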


Theorem \ref{t:spectrum} is an immediate consequence of Proposition \ref{p:Klambda}. To see this, let us fix $K>0$ and apply Proposition \ref{p:Klambda}. We get that, for any $\lambda\in\CC$ such that $d(\lambda,\Sigma)>cp^{-1/4}$ with $c=2C_K$ and $|\lambda|<K$, 
\[
\|K_p(\lambda):L^2(X,L^p\otimes E)\to L^2(X,L^p\otimes E)\| \leq \frac 12.
\]
Therefore, the operator $I +K_p(\lambda)$ is invertible in $L^2(X,L^p\otimes E)$. This immediately implies that the operator $H_{p}-\lambda$ is invertible in $L^2(X,L^p\otimes E)$ with
\[
\left(H_{p}-\lambda\right)^{-1}=Q_p(\lambda)(I +K_p(\lambda))^{-1},
\]
and $\lambda$ is not in $\sigma(H_{p})$ that completes the proof of Theorem \ref{t:spectrum}.

The proof of Proposition \ref{p:Klambda} will be given in the rest of this section. 

\subsection{Approximation by the model operator}
Our construction of an approximate inverse for the operator $H_{p}-\lambda$ with $\lambda \not\in \Sigma$ is based on the approximation of the operator $H_p$ by the model operator $\mathcal H^{(x_0)}_{p}$ in a sufficiently small neighborhood of an arbitrary point $x_0$. Since the spectrum of $\mathcal H^{(x_0)}_{p}$ coincides with $\Sigma_{x_0}$, the operator $\mathcal H^{(x_0)}_{p}-\lambda$ is invertible and we use its inverse to construct an approximate inverse for $H_{p}-\lambda$ in a small neighborhood of $x_0$. The global approximate inverse for $H_{p}-\lambda$ is constructed from these local approximate inverse, taking a cover by neighborhoods of size $O(p^{-1/4})$.

First, we construct some special coordinates near $x_0$  (see, for instance, \cite[Theorem 6.2.2]{FT}). 
We start with the coordinate chart $\gamma_{x_0} : B(0,c)\subset  \RR^{2n}\to X$ defined in Introduction. 
It satisfies 
\begin{equation}\label{e:x0gamma}
\gamma_{x_0}(0)=x_0,\quad (D\gamma_{x_0})_0 \left(\frac{\partial}{\partial Z_j}\right)=e_j,\quad j=1,\ldots, 2n,
\end{equation} 
and 
\begin{equation}\label{e:Bx0}
(\gamma_{x_0}^*\mathbf B)_{0}=\sum_{k=1}^n a_k(x_0)dZ_{2k-1}\wedge dZ_{2k}.
\end{equation}
Then, using Darboux Lemma, we deform $\gamma_{x_0}$ into a coordinate chart $\varkappa_{x_0} : B(0,c)\subset \RR^{2n}\to U_{x_0}=\varkappa_{x_0}(B(0,c))\subset X$ such that 
\begin{equation}\label{e:x0}
\varkappa_{x_0}(0)=x_0,\quad (D\varkappa_{x_0})_0 \left(\frac{\partial}{\partial Z_j}\right)=e_j,\quad j=1,\ldots, 2n,
\end{equation} 
and $\varkappa_{x_0}^*\mathbf B$ is a constant 2-form on $B(0,c)$ given by
\begin{equation}\label{e:kappaBx0}
(\varkappa_{x_0}^*\mathbf B)_Z=\sum_{k=1}^n a_k(x_0) dZ_{2k-1}\wedge dZ_{2k}\quad Z\in B(0,c).
\end{equation}
Moreover, as can be seen from the proof of the Darboux Lemma based on well-known Moser's argument (see, for instance, Lemma 3.14 in \cite[p.94]{McDuff-Salamon} and its proof), we can choose the coordinate charts $\varkappa_{x_0}$ so that, for every $k>0$, they are bounded in the $C^k$ norm uniformly with respect to $x_0$ in the sense that
\[
\|\varkappa^{-1}_{x_0} \circ \gamma_{x_0} : B(0,c)\to \RR^{2n}\|_{C^k}\leq C_k
\]
with $C_k$ independent of $x_0$.

It is easy to see that there exists a trivialization of the Hermitian line bundle $L$ over $U_{x_0}$:
\[
\tau^L_{x_0} : U_{x_0}\times \CC \stackrel{\cong}{\to}L\left|_{U_{x_0}}\right.,
\]
such that the connection one-form of $\nabla^L$ in this trivialization coincides with 
the one-form $\alpha$ given by \eqref{e:Aflat}. We also assume that there exists a trivialization of the Hermitian bundle $E$ over $U_{x_0}$:
\[
\tau^E_{x_0} : U_{x_0}\times E_{x_0} \stackrel{\cong}{\to}E\left|_{U_{x_0}}\right.,
\]
These trivializations induce a trivialization of  $L^p\otimes E$ over $U_{x_0}$:
\[
\tau_{{x_0},p}=(\tau^L_{x_0})^p\otimes \tau^E_{x_0} : U_{x_0}\times E_{x_0} \stackrel{\cong}{\to}L^p\otimes E\left|_{U_{x_0}}\right..
\]
For any $x\in U_{x_0}$, we will write $\tau_{x_0,p}(x) : E_{x_0}\to L^p_x\otimes E_x$ for the associated linear map in the fibers. 

Let $g_{x_0}=\varkappa^*_{x_0} g$ be the Riemannian metric on $B(0,c)$ induced by the Riemannian metric $g$ on $X$. 
We introduce a map
\[
T^*_{{x_0},p} : C^\infty(X, L^p\otimes E)\to C^\infty(B(0,c), E_{x_0}),
\]
defined for $u\in C^\infty(X, L^p\otimes E)$ by 
\begin{equation}\label{e:defT}
T^*_{{x_0},p} u(Z)=|g_{x_0}(Z)|^{1/4}\tau_{{x_0},p}^{-1}(\varkappa_{x_0}(Z))[u(\varkappa_{x_0}(Z))],\quad  Z\in B(0,c).
\end{equation}
Consider the differential operator $H_p^{(x_0)}=T^*_{{x_0},p} \circ H_{p}\circ (T^*_{{x_0},p})^{-1}$
on $C^\infty(B(0,c), E_{x_0})$. It can be written as
\[
H_p^{(x_0)}=|g_{x_0}(Z)|^{1/4} \tau^*_{{x_0},p} \circ H_{p}\circ (\tau^*_{{x_0},p})^{-1} |g_{x_0}(Z)|^{-1/4}.
\]
Using the standard formula for the Bochner Laplacian in local coordinates, one can write
\begin{multline}\label{e:TDeltaT}
\tau^*_{{x_0},p} \circ H_{p}\circ (\tau^*_{{x_0},p})^{-1}\\ =-\frac 1p \sum_{\ell,m=1}^{2n}g_{x_0}^{\ell m}\nabla^{L^p\otimes E}_{e_\ell}\nabla^{L^p\otimes E}_{e_m}+\frac 1p \sum_{\ell=1}^{2n} \Gamma^\ell \nabla^{L^p\otimes E}_{e_\ell}+V_{x_0},
\end{multline}
where $\{e_j\}$ is the standard base in $\RR^{2n}$, $g_{x_0}^{\ell m}$ is the inverse of the matrix of $g_{x_0}$, $V_{x_0}=\tau^{E*}_{x_0} \circ V\circ (\tau^{E*}_{x_0})^{-1}\in C^\infty(B(0,c), \operatorname{End}(E_{x_0}))$ and $\Gamma^\ell\in C^\infty(B(0,c))$, $\ell=1,\ldots,2n,$ are some functions. If we denote by $\Gamma^E\in C^\infty(T(B(0,c)), \operatorname{End}(E_{x_0}))$ the connection one-form for the connection $\nabla^E$, we can write
\[
\nabla^{L^p\otimes E}_{v}=\nabla^{L^p_0}_{v}+\Gamma^E(v), \quad v\in T(B(0,c))=B(0,c)\times \RR^{2n}.
\]
where the connection $\nabla^{L^p_0}$ is given by \eqref{e:nablaL0}.

Then we have 
\[
|g_{x_0}|^{1/4} \nabla^{L^p\otimes E}_{v}|g_{x_0}|^{-1/4}=\nabla^{L^p_0}_{v}+\Gamma^E(v)-\frac 14v(\ln |g_{x_0}|).
\]
It follows that 
\begin{equation}\label{e:TDeltaT-D}
H_p^{(x_0)}=-\frac 1p \sum_{\ell,m=1}^{2n}g_{x_0}^{\ell m}\nabla^{L^p_0}_{e_\ell}\nabla^{L^p_0}_{e_m}+\frac 1p \sum_{\ell=1}^{2n} F_{\ell,{x_0}} \nabla^{L^p_0}_{e_\ell}+V_{x_0}+\frac 1pG_{x_0}
\end{equation}
with some $F_{\ell,{x_0}}, G_{x_0}\in C^\infty(B(0,c), \operatorname{End}(E_{x_0}))$, uniformly bounded on $x_0$. 

By \eqref{e:TDeltaT-D}, it follows that 
\begin{multline}\label{e:TDeltaT-D1}
H_p^{(x_0)}-\mathcal H^{(x_0)}_p\\ =-\frac 1p \sum_{\ell,m=1}^{2n}(g_{x_0}^{\ell m}-\delta^{\ell m})\nabla^{L^p_0}_{e_\ell}\nabla^{L^p_0}_{e_m}+\frac 1p \sum_{\ell=1}^{2n} F_{\ell,{x_0}} \nabla^{L^p_0}_{e_\ell}+V_{x_0}-V_{x_0}(0)+\frac 1pG_{x_0}. 
\end{multline}
By \eqref{e:x0}, we have $g_{x_0}^{\ell m}(Z)=\delta^{\ell m}, \ell,m=1,\ldots,2n$. 

\subsection{Some estimates for the model operator}
In this section, we will prove some norm estimates for the resolvent of the model operator.  
First, we prove an elliptic estimate, taking care of its dependence on $p$. We will denote by $\|\cdot\|$ the $L^2$-norm in $C^\infty_c(T_{x_0}X, E_{x_0})$. Recall that $\{e_j : j=1,\ldots,2n\}$ is a chosen orthonormal base in $T_{x_0}X$. 

\begin{lem}
There exists $C>0$ such that for any $v\in C^\infty_c(T_{x_0}X, E_{x_0})$ and $p\in \NN$, 
\begin{equation}\label{e:ek-el}
\sum_{k,\ell=1}^{2n} \left\|\nabla^{L^p_0}_{e_k}\nabla^{L^p_0}_{e_\ell}v\right\|\leq C\left(\left\|\Delta^{L^p_0}v\right\|+\sqrt{p}\left\|\nabla^{L^p_0}v\right\|\right).
\end{equation}
\end{lem}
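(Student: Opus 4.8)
The plan is to reduce the estimate to an $L^2$ identity obtained by two integrations by parts, in which the only genuinely new ingredient is the commutator of two covariant derivatives $\nabla^{L^p_0}$. Recall that on the trivial bundle over $T_{x_0}X$ the connection $\nabla^{L^p_0}$ differs from the flat connection by $-ip\alpha$, so its curvature is the constant $2$-form $p\,\mathbf B_{x_0}$; concretely, for the chosen orthonormal coordinate vector fields $e_k=\partial/\partial Z_k$ (which are parallel and divergence-free),
\[
\big[\nabla^{L^p_0}_{e_k},\nabla^{L^p_0}_{e_\ell}\big]=-ip\,\mathbf B_{x_0}(e_k,e_\ell)=:-ip\,c_{k\ell},
\]
a scalar operator with $|c_{k\ell}|\le \|\mathbf B_{x_0}\|\le C$ uniformly in $x_0$ by the bounded geometry assumption. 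Since the Euclidean metric $g_{x_0}$ is flat and the $e_k$ are orthonormal and parallel, $\Delta^{L^p_0}=-\sum_{k}\nabla^{L^p_0}_{e_k}\nabla^{L^p_0}_{e_k}=\sum_k\big(\nabla^{L^p_0}_{e_k}\big)^*\nabla^{L^p_0}_{e_k}$, and each $\nabla^{L^p_0}_{e_k}$ is formally skew-adjoint on $L^2(T_{x_0}X,E_{x_0})$ because $e_k$ is divergence-free and the connection is Hermitian.

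First I would compute, for $v\in C^\infty_c(T_{x_0}X,E_{x_0})$, using skew-adjointness of $\nabla^{L^p_0}_{e_k}$ and then $\sum_k\big(\nabla^{L^p_0}_{e_k}\big)^*\nabla^{L^p_0}_{e_k}=\Delta^{L^p_0}$,
\[
\sum_{k,\ell=1}^{2n}\left\|\nabla^{L^p_0}_{e_k}\nabla^{L^p_0}_{e_\ell}v\right\|^2
=\sum_{\ell=1}^{2n}\left\langle \nabla^{L^p_0}_{e_\ell}v,\ \Delta^{L^p_0}\nabla^{L^p_0}_{e_\ell}v\right\rangle.
\]
Next I would move $\Delta^{L^p_0}$ past $\nabla^{L^p_0}_{e_\ell}$. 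A one-line computation from the commutator relation above gives $\big[\nabla^{L^p_0}_{e_k}\nabla^{L^p_0}_{e_k},\nabla^{L^p_0}_{e_\ell}\big]=-2ip\,c_{k\ell}\,\nabla^{L^p_0}_{e_k}$ (using that $c_{k\ell}$ is a constant, hence commutes with $\nabla^{L^p_0}_{e_k}$), and therefore $\big[\Delta^{L^p_0},\nabla^{L^p_0}_{e_\ell}\big]=2ip\sum_k c_{k\ell}\nabla^{L^p_0}_{e_k}$. Substituting this and using skew-adjointness once more to recognize $\sum_\ell\big\langle\nabla^{L^p_0}_{e_\ell}v,\nabla^{L^p_0}_{e_\ell}\Delta^{L^p_0}v\big\rangle=\|\Delta^{L^p_0}v\|^2$, I obtain
\[
\sum_{k,\ell=1}^{2n}\left\|\nabla^{L^p_0}_{e_k}\nabla^{L^p_0}_{e_\ell}v\right\|^2
=\left\|\Delta^{L^p_0}v\right\|^2+2ip\sum_{k,\ell=1}^{2n}c_{k\ell}\left\langle\nabla^{L^p_0}_{e_\ell}v,\nabla^{L^p_0}_{e_k}v\right\rangle .
\]
By antisymmetry of $c_{k\ell}$ the second term is real, and it is bounded in absolute value by $2p\sum_{k,\ell}|c_{k\ell}|\,\|\nabla^{L^p_0}_{e_\ell}v\|\,\|\nabla^{L^p_0}_{e_k}v\|\le C'p\,\|\nabla^{L^p_0}v\|^2$ by Cauchy--Schwarz and the uniform bound on $c_{k\ell}$.

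Finally I would take square roots to get $\big(\sum_{k,\ell}\|\nabla^{L^p_0}_{e_k}\nabla^{L^p_0}_{e_\ell}v\|^2\big)^{1/2}\le\|\Delta^{L^p_0}v\|+\sqrt{C'p}\,\|\nabla^{L^p_0}v\|$, and then pass from the $\ell^2$ to the $\ell^1$ sum over the $(2n)^2$ pairs $(k,\ell)$ at the cost of a factor $2n$. This yields \eqref{e:ek-el} with a constant $C$ depending only on $n$ and on the uniform curvature bound, hence independent of $x_0$ and $p$. There is no serious obstacle: the only point requiring care is the bookkeeping of the factor $p$ coming from the curvature commutator, which is precisely what produces the $\sqrt{p}$ weight on the first-order term; the integrations by parts are legitimate since $v$ is compactly supported and all coefficients are smooth.
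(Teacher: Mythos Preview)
Your proof is correct and follows essentially the same approach as the paper: both arguments compute $\sum_{k,\ell}\|\nabla^{L^p_0}_{e_k}\nabla^{L^p_0}_{e_\ell}v\|^2$ via integration by parts using the skew-adjointness of $\nabla^{L^p_0}_{e_k}$ and the constant-curvature commutator $[\nabla^{L^p_0}_{e_k},\nabla^{L^p_0}_{e_\ell}]=pR_{k\ell}$, arriving at $\|\Delta^{L^p_0}v\|^2$ plus a cross term of size $O(p)\|\nabla^{L^p_0}v\|^2$. The only cosmetic difference is that the paper manipulates a single $(k,\ell)$-term before summing, whereas you sum first and commute $\Delta^{L^p_0}$ past $\nabla^{L^p_0}_{e_\ell}$; you also spell out the final $\ell^2$-to-$\ell^1$ step, which the paper leaves implicit.
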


\begin{proof}
The operator $\nabla^{L^p_0}_{e_j}$ is formally skew-adjoint in ${L^2(T_{x_0}X, E_{x_0})}$:
\begin{equation}\label{e:adjoint}
\left(\nabla^{L^p_0}_{e_j}\right)^*=-\nabla^{L^p_0}_{e_j},
\end{equation}
and, for the commutator $\left[\nabla^{L^p_0}_{e_j}, \nabla^{L^p_0}_{e_k}\right]$, we have
\begin{equation}\label{e:commutator}
\left[\nabla^{L^p_0}_{e_j}, \nabla^{L^p_0}_{e_k}\right]=pR_{jk},
\end{equation}
where $R_{jk}$ is a constant function. Observe that 
\begin{equation}
\label{e:Delta0} 
\Delta^{L^p_0}=-\sum_{k=1}^{2n} \left(\nabla^{L^p_0}_{e_k}\right)^2.
\end{equation}

By \eqref{e:adjoint}, we have
\[
\|\nabla^{L^p_0}_{e_k}\nabla^{L^p_0}_{e_\ell}v\|^2=\langle\nabla^{L^p_0}_{e_k}\nabla^{L^p_0}_{e_\ell}v, \nabla^{L^p_0}_{e_k}\nabla^{L^p_0}_{e_\ell}v\rangle = \langle  \nabla^{L^p_0}_{e_\ell}(\nabla^{L^p_0}_{e_k})^2\nabla^{L^p_0}_{e_\ell}v,v\rangle.
\]
Now we move $\nabla^{L^p_0}_{e_\ell}$ to the right. Using \eqref{e:adjoint} and \eqref{e:commutator},  we get
\begin{align*}
\|\nabla^{L^p_0}_{e_k}\nabla^{L^p_0}_{e_\ell}v\|^2=& \langle (\nabla^{L^p_0}_{e_k})^2 (\nabla^{L^p_0}_{e_\ell})^2v,v\rangle + 2pR_{\ell k}\langle  \nabla^{L^p_0}_{e_k}\nabla^{L^p_0}_{e_\ell}v,v\rangle \\ = & \langle (\nabla^{L^p_0}_{e_\ell})^2v,(\nabla^{L^p_0}_{e_k})^2v\rangle - 2pR_{\ell k}\langle \nabla^{L^p_0}_{e_\ell}v,\nabla^{L^p_0}_{e_k}v\rangle.
\end{align*}
Since by \eqref{e:Delta0} we have 
\[
\sum_{k,\ell=1}^{2n}\langle (\nabla^{L^p_0}_{e_\ell})^2v,(\nabla^{L^p_0}_{e_k})^2v\rangle=\langle \sum_{\ell=1}^{2n} (\nabla^{L^p_0}_{e_\ell})^2v,\sum_{k=1}^{2n} (\nabla^{L^p_0}_{e_k})^2v\rangle=\left\|\Delta^{L^p_0}v\right\|^2,
\]
this immediately completes the proof.
\end{proof}

Since $\mathcal H^{(x_0)}_{p}$ is self-adjoint and its spectrum coincides with $\Sigma_{x_0}$, its resolvent $R^{(x_0)}_{p}(\lambda):=\left(\mathcal H^{(x_0)}_{p}-\lambda\right)^{-1}$ satisfies
\begin{equation}\label{e:res1-s1}
\left\|R^{(x_0)}_{p}(\lambda)\right\|\leq d(\lambda,\Sigma)^{-1},\quad \lambda\not\in \Sigma,
\end{equation}
where $\|\cdot\|$ denotes the operator norm for the $L^2$-norms. 

Next, assuming $\lambda\not\in \Sigma$, $|\lambda|<K$, we get
\begin{equation}\label{e:res2-s1}
\left\|\tfrac{1}{p}\Delta^{L^p_0}R^{(x_0)}_{p}(\lambda)\right\|=\left\|I+(\lambda-V(x_0))R^{(x_0)}_{p}(\lambda) \right\| \leq K_1d(\lambda,\Sigma)^{-1},
\end{equation}
where $K_1=2K+\sup_{x\in X}|V(x)|$. (Here we use the fact that $d(\lambda,\Sigma)\leq |\lambda|\leq K$ and, therefore, $1\leq Kd(\lambda,\Sigma)^{-1}$.)

Finally, for any $v\in C^\infty_c(T_{x_0}X, E_{x_0})$, we have
\begin{multline*}
\left\|\frac{1}{\sqrt{p}}\nabla^{L^p_0}R^{(x_0)}_{p}(\lambda)v\right\|^2=-\left\langle\frac{1}{p}\Delta^{L^p_0}R^{(x_0)}_{p}(\lambda)v,R^{(x_0)}_{p}(\lambda)v \right\rangle \\
\\ \leq \left\|\frac{1}{p}\Delta^{L^p_0}R^{(x_0)}_{p}(\lambda)v\right\| \left\| R^{(x_0)}_{p}(\lambda)v \right\|,
\end{multline*}
that, by \eqref{e:res1-s1} and \eqref{e:res2-s1},  gives the estimate
\begin{equation}\label{e:res3-s1}
\left\|\frac{1}{\sqrt{p}}\nabla^{L^p_0}R^{(x_0)}_{p}(\lambda)\right\|\leq K_1d(\lambda,\Sigma)^{-1}. 
\end{equation}

By \eqref{e:ek-el}, \eqref{e:res2-s1} and \eqref{e:res3-s1}, we get
\begin{multline}\label{e:ek-el-res}
\sum_{k,\ell=1}^{2n} \left\|\frac 1p\nabla^{L^p_0}_{e_k}\nabla^{L^p_0}_{e_\ell}R^{(x_0)}_{p}(\lambda) \right\|\\ \leq C\left(\left\|\frac 1p \Delta^{L^p_0}R^{(x_0)}_{p}(\lambda)\right\|+\left\|\frac{1}{\sqrt{p}}\nabla^{L^p_0}R^{(x_0)}_{p}(\lambda)\right\|\right) \leq C_1d(\lambda,\Sigma)^{-1}.
\end{multline}

\subsection{Construction of an approximate inverse}
For each $p\in \NN$, we consider the restrictions of the coordinates charts $\varkappa_{x_0}$ to the ball $B(0,p^{-1/4})$. We can choose a finite collection of coordinate charts 
\[
\varkappa_{\alpha,p}:=\varkappa_{x_{\alpha,p}}\left|_{B(0,p^{-1/4})}\right. : B(0,p^{-1/4}) \to U_{\alpha,p} := \varkappa_{\alpha,p}(B(0,p^{-1/4}))\subset X.  
\] 
with $1\leq \alpha\leq I_p$, $I_p\in \NN\cup \{\infty\}$, such that the open subsets $U_{\alpha,p}$ cover $X$. For simplicity of notation, we will often omit $p$, writing $\varkappa_{\alpha}$, $U_\alpha$ etc. 
By a classical argument similar to Vitali's lemma, one can show for the cardinality of the set $\mathcal I_{p,\alpha}= \{1\leq \beta \leq I_p : U_\alpha \cap U_\beta \neq \varnothing \}$,  
\begin{equation}\label{e:Leb}
\# \mathcal I_{p,\alpha} \leq K_0, \quad 1\leq \alpha\leq I_p,
\end{equation}
with the constant $K_0$, depending only on the dimension $n$.

Choose a family of smooth functions $\{\varphi_{\alpha}=\varphi_{\alpha,p} : \RR^{2n}\to [0; 1], 1\leq \alpha\leq I_p\}$ supported on the ball $B(0,p^{-1/4})$, which gives a partition of unity on $X$ subordinate to $\{U_\alpha\}$:
\[
\sum_{\alpha=1}^{I_p}\varphi_\alpha \circ \varkappa^{-1}_\alpha \equiv 1\ \text{on}\ X,
\]
and satisfies the condition: for any $\gamma\in \ZZ^{2n}_+$, there exists $C_\gamma>0$ such that
\[
|\partial^\gamma\varphi_\alpha(Z)|<C_\gamma p^{(1/4)|\gamma|}, \quad Z\in \RR^{2n}, \quad 1\leq \alpha\leq I_p.
\] 

For every $1\leq \alpha\leq I_p$, we denote by $g_\alpha$ the induced Riemannian metric $g_{x_\alpha}$ on $B(0,p^{-1/4})$. We will use notation
\[
T^*_{\alpha}=T^*_{\alpha,p}: C^\infty(X, L^p\otimes E)\to C^\infty(B(0,p^{-1/4}), E_{x_\alpha})
\]
for the composition of the map $T^*_{x_\alpha,p}$ defined by \eqref{e:defT} with the restriction map  $C^\infty(B(0,c), E_{x_\alpha})\to  C^\infty(B(0,p^{-1/4}), E_{x_\alpha})$.

We have
\begin{equation}\label{e:Tap-unitary}
\|T^*_{\alpha}u\|^2_{L^2(B(0,p^{-1/4}), E_{x_\alpha})}=\|u\|^2_{L^2(U_\alpha,L^p\otimes E)}.
\end{equation}

If $U_\alpha \cap U_\beta \neq \varnothing$, we denote by $\varkappa_{\beta,\alpha} :=\varkappa^{-1}_\beta\circ \varkappa_\alpha : \varkappa^{-1}_\alpha(U_\alpha \cap U_\beta) \to \varkappa^{-1}_\beta(U_\alpha \cap U_\beta)$ and $\tau_{\alpha,\beta,p} :=\tau^{-1}_{\alpha,p}\circ \tau_{\beta,p} : (U_\alpha \cap U_\beta)\times E_{x_\beta}\to (U_\alpha \cap U_\beta)\times E_{x_\alpha}$ the associated coordinate change transformations. The family $\varkappa_{\beta,\alpha}$, $U_\alpha \cap U_\beta \neq \varnothing$,
 for any $\gamma\in \ZZ^{2n}_+$ satisfies
\[
|\partial^\gamma \varkappa_{\beta,\alpha}(Z)| < C_\gamma, \quad Z\in \varkappa^{-1}_\alpha(U_\alpha \cap U_\beta), \quad 1\leq \alpha,\beta\leq I_p.
\]  
We can write each $\tau_{\alpha,\beta}$
as 
\[
\tau_{\alpha,\beta}(x,v)=(x, \tau_{\alpha,\beta}(x)v), \quad (x,v)\in (U_\alpha \cap U_\beta)\times E_{x_\beta},
\]
where $\tau_{\alpha,\beta}(x): E_{x_\beta}\to E_{x_\alpha}$ is a linear unitary operator.
Then for any $u\in C^\infty(X,L^p\otimes E)$ and for any $1\leq \alpha,\beta\leq I_p$ with $U_\alpha \cap U_\beta \neq \varnothing$, we have 
\begin{equation}\label{e:Tap-Tbp}
T^*_{\alpha}u(Z) =\tau_{\alpha,\beta}(\varkappa_\alpha(Z)) J_{\alpha,\beta}(Z)T^*_{\beta}u(\varkappa_{\beta,\alpha}(Z)), \quad Z\in \varkappa^{-1}_\beta(U_\alpha \cap U_\beta),
\end{equation}
where 
\[
J_{\alpha,\beta}(Z)= \frac{|g_\alpha(Z)|^{1/4}}{| g_\beta(\varkappa_{\beta,\alpha}(Z))|^{1/4}}.
\]

Let $\psi : \RR^{2n}\to [0,1]$ be a smooth function such that $\psi(Z)=1$ for $|Z|\leq 1$,   
$\psi(Z)=0$ for $|Z|\geq 2$. Put $\psi_p(Z)=\psi(p^{1/4}Z)$. Observe that $\psi_p\varphi_\alpha=\varphi_\alpha$ for any $\alpha$. 

For any $\lambda\not\in \Sigma$ and $p\in \NN$, the operator $Q_p(\lambda)$ acting on $C^\infty(X, L^p\otimes E)$ is defined for any $u\in C^\infty(X, L^p\otimes E)$ by 
\begin{equation}\label{e:defQp0}
Q_p(\lambda)u=\sum_{\beta=1}^{I_p}(T^*_{\beta})^{-1}\left(\psi_p\circ R^{(x_\beta)}_{p}(\lambda) \circ \varphi_\beta\right)T^*_{\beta} u.  
\end{equation}
It is easy to see that $Q_p(\lambda)$ is a bounded operator in $L^2(X, L^p\otimes E)$. 

\subsection{Proof of Proposition \ref{p:Klambda}}
Let $u\in C^\infty(X, L^p\otimes E)$. Using \eqref{e:Tap-Tbp} and \eqref{e:defQp0}, for any $\alpha =1, \ldots, I_p$, we have
\begin{multline*}
T^*_{\alpha} \left(H_{p}-\lambda\right)Q_p(\lambda)u(Z)\\
\begin{aligned}
= & \sum_{\beta=1}^{I_p}T^*_{\alpha} \left(H_{p}-\lambda\right) (T^*_{\beta})^{-1}\left(\psi_p\circ R^{(x_\beta)}_{p}(\lambda) \circ \varphi_\beta\right)T^*_{\beta} u(Z) \\
= & \sum_{\beta\in \mathcal I_{p,\alpha}}\tau_{\alpha,\beta}(\varkappa_\alpha(Z))J_{\alpha,\beta}(Z)\left(H_p^{(x_\beta)}-\lambda\right) \times \\ & \times \left(\psi_p\circ R^{(x_\beta)}_{p}(\lambda) \circ \varphi_\beta \right)T^*_{\beta}u(\varkappa_{\beta,\alpha}(Z)).
\end{aligned}
\end{multline*}
We can write 
\[
T^*_{\alpha} \left(H_{p}-\lambda\right)Q_p(\lambda)u=T^*_{\alpha}(u+K_p(\lambda)u),
\]
where 
\begin{equation}\label{e:Kp}
T^*_{\alpha} (K_p(\lambda)u) = R_{1,\alpha}u+R_{2,\alpha}u,
\end{equation}
with
\begin{multline*}
R_{1,\alpha}u(Z) = \sum_{\beta\in \mathcal I_{p,\alpha}} \tau_{\alpha,\beta}(\varkappa_\alpha(Z))J_{\alpha,\beta}(Z)\times \\ \times \left(\psi_p\circ (H_p^{(x_\beta)}-\mathcal H^{(x_\beta)}_{p}) \circ R^{(x_\beta)}_{p}(\lambda) \circ \varphi_\beta\right)T^*_{\beta}u(\varkappa_{\beta,\alpha}(Z)),
\end{multline*}
\begin{multline*}
R_{2,\alpha}u(Z) = \sum_{\beta\in \mathcal I_{p,\alpha}} \tau_{\alpha,\beta}(\varkappa_\alpha(Z))J_{\alpha,\beta}(Z) \times\\ \times\left([H_p^{(x_\beta)},\psi_p]\circ R^{(x_\beta)}_{p}(\lambda) \circ \varphi_\beta\right)T^*_{\beta}u(\varkappa_{\beta,\alpha}(Z)).
\end{multline*}

Since $\psi_p$ is supported on the ball $B(0,2p^{-1/4})$, we have 
\[
|g^{\ell m}_\beta(Z)-\delta^{\ell m}|\leq Cp^{-1/4}, \quad |V_\beta(Z)-V_\beta(0)|\leq Cp^{-1/4}, 
\] 
on the support of $\psi_p$ and therefore using  \eqref{e:ek-el}, \eqref{e:res1-s1}, \eqref{e:res2-s1}, \eqref{e:res3-s1}, \eqref{e:Tap-unitary} and \eqref{e:TDeltaT-D1}, we conclude: for any $u\in C^\infty(X, L^p\otimes E)$, we get
\begin{multline*}
\left\|\psi_p (H_p^{(x_\beta)}-\mathcal H^{(x_\beta)}_{p})R^{(x_\beta)}_{p}(\lambda)\varphi_\beta T^*_{\beta}u\right\|_{L^2(\RR^{2n}, E_{x_0})}\\ 
\begin{aligned}
\leq & Cp^{-1/4}\sum_{\ell,m=1}^{2n}\left\| \frac 1p\nabla^{L^p_0}_{e_\ell}\nabla^{L^p_0}_{e_m} R^{(x_\beta)}_{p}(\lambda) \circ \varphi_\beta T^*_{\beta}u\right\|_{L^2(\RR^{2n}, E_{x_0})}\\ & +Cp^{-1/2}\left\|\frac{1}{\sqrt{p}}\nabla^{L^p_0}R^{(x_\beta)}_{p}(\lambda) \circ \varphi_\beta T^*_{\beta}u\right\|_{L^2(\RR^{2n}, E_{x_0})}\\  &+Cp^{-1/4}\left\|R^{(x_\beta)}_{p}(\lambda) \circ \varphi_\beta T^*_{\beta}u\right\|_{L^2(\RR^{2n}, E_{x_0})},
\end{aligned}
\end{multline*}
and 
\begin{equation}
\|R_{1,\alpha}u\|_{L^2(\RR^{2n})} \leq  Cp^{-1/4}d(\lambda,\Sigma)^{-1} \sum_{\beta\in \mathcal I_{p,\alpha}} \left\|u\right\|_{L^2(U_\beta,L^p)}. \label{e:R2}
\end{equation}

By \eqref{e:TDeltaT-D}, for the commutator $[H_p^{(x_\beta)}, \psi_p]$, we get
\[
[H_p^{(x_\beta)}, \psi_p] =-\frac 1p\sum_{\ell,m=1}^{2n}(2g^{\ell m}_\beta \nabla_{e_\ell}\psi_p\nabla^{L^p_0}_{e_m}+g^{\ell m}_\beta\nabla_{e_\ell,e_m}^2\psi_p) +\frac 1p \sum_{\ell=1}^{2n} F_{\ell,\beta} \nabla_{e_\ell}\psi_p.
\]
Since $|\nabla \psi_p|<C p^{1/4}$, $|\nabla^2\psi_p|<Cp^{1/2}$,
using \eqref{e:res1-s1}, \eqref{e:res2-s1}, \eqref{e:res3-s1} and \eqref{e:Tap-unitary} as above, we conclude:
\begin{align}
\|R_{2,\alpha}u\|_{L^2(\RR^{2n})} \leq & Cp^{-1/2}\sum_{\beta\in \mathcal I_{p,\alpha}} \left\|R^{(x_\beta)}_{p}(\lambda)\varphi_\beta T^*_{\beta}u \right\|_{L^2(\RR^{2n}, E_{x_0})}\nonumber \\ & +C p^{-1/4}\sum_{\beta\in \mathcal I_{p,\alpha}} \left\|\frac{1}{\sqrt{p}}\nabla^{L^p_0}R^{(x_\beta)}_{p}(\lambda) \varphi_\beta T^*_{\beta}u\right\|_{L^2(\RR^{2n}, E_{x_0})}\nonumber \\
\leq & Cp^{-1/4} d(\lambda,\Sigma)^{-1} \sum_{\beta\in \mathcal I_{p,\alpha}} \left\|u\right\|_{L^2(U_\beta,L^p)}. \label{e:R1}
\end{align}
By \eqref{e:Kp}, \eqref{e:R2} and \eqref{e:R1}, we arrive at the following estimate:
\[
\|T^*_{\alpha}(K_p(\lambda)u)\|_{L^2(\RR^{2n}, E_{x_0})} \leq Cp^{-1/4}d(\lambda,\Sigma)^{-1}\sum_{\beta\in \mathcal I_{p,\alpha}} \left\|u\right\|_{L^2(U_\beta,L^p\otimes E)}.
\]
By \eqref{e:Tap-unitary}, it follows that
\[
\|K_p(\lambda)u\|_{L^2(U_\alpha,L^p\otimes E)} \leq Cp^{-1/4}d(\lambda,\Sigma)^{-1}\sum_{\beta\in \mathcal I_{p,\alpha}} \left\|u\right\|_{L^2(U_\beta,L^p\otimes E)},
\]
and, since $(\sum_{\beta\in \mathcal I_{p,\alpha}}a_\beta)^2 \leq K_0 \sum_{\beta\in \mathcal I_{p,\alpha}}a^2_\beta$, 
\begin{equation}\label{e:Kp-U-alpha}
\|K_p(\lambda)u\|^2_{L^2(U_\alpha,L^p)\otimes E} \leq C^2K_0p^{-1/2}d(\lambda,\Sigma)^{-2}\sum_{\beta\in \mathcal I_{p,\alpha}} \left\|u\right\|^2_{L^2(U_\beta,L^p\otimes E)}.
\end{equation}
Using the fact that $\{U_\alpha : \alpha =1, \ldots, I_p\}$ is a covering of $X$ and \eqref{e:Kp-U-alpha}, we infer 
\begin{multline*}
\|K_p(\lambda) u\|^2_{L^2(X,L^p\otimes E)}\leq  \sum_{\alpha=1}^{I_p} \|K_p(\lambda)u\|^2_{L^2(U_\alpha,L^p\otimes E)}\\ \leq C^2K_0p^{-1/2}d(\lambda,\Sigma)^{-2} \sum_{\alpha=1}^{I_p} \sum_{\beta\in \mathcal I_{p,\alpha}} \left\|u\right\|^2_{L^2(U_\beta,L^p\otimes E)},
\end{multline*}
Using \eqref{e:Leb}, it is easy to see that each term in the double sum in the right rand-side of the last estimate enters at most $K_0$ times. Therefore, we infer that 
\[
\sum_{\alpha=1}^{I_p} \sum_{\beta\in \mathcal I_{p,\alpha}} \left\|u\right\|^2_{L^2(U_\beta,L^p\otimes E)}\leq K_0 \sum_{\alpha=1}^{I_p} \left\|u\right\|^2_{L^2(U_\alpha,L^p\otimes E)}
\]
and 
\[
\|K_p(\lambda) u\|^2_{L^2(X,L^p\otimes E)} \leq C^2K^2_0p^{-1/2}d(\lambda,\Sigma)^{-2} \sum_{\alpha=1}^{I_p} \left\|u\right\|^2_{L^2(U_\alpha,L^p\otimes E)}
\]
Finally, by \eqref{e:Leb}, we have
\[
\sum_{\alpha=1}^{I_p} \left\|u\right\|^2_{L^2(U_\alpha,L^p\otimes E)}\leq  K_0\|u\|^2_{L^2(X,L^p\otimes E)},
\]
that gives  
\[
\|K_p(\lambda) u\|^2_{L^2(X,L^p\otimes E)} \leq C^2K^3_0p^{-1/2}d(\lambda,\Sigma)^{-2} \|u\|^2_{L^2(X,L^p\otimes E)}.
\]
This completes the proof of Proposition \ref{p:Klambda}.

\section{Weighted resolvent estimates}\label{s:res-estimates}

In  this section we prove norm weighted estimates for the resolvent of the operator $H_p$. These estimates is a slightly modified version of the estimates obtained in \cite[Theorems 3.3-3.5]{Kor18}, \cite[Theorems 3.2-3.4]{ko-ma-ma}, which are weighted analogs of \cite[Theorems 4.8-4.10]{dai-liu-ma}, \cite[Theorems 1.7-1.9]{ma-ma08}. The main difference is that we state explicitly the $\|\cdot\|^{m,m+2}$-norm estimate for the resolvent instead of the $\|\cdot\|^{-1,1}$-norm estimates for its iterated commutators (see Theorem \eqref{Thm1.9a} below).

\subsection{Preliminaries on Sobolev spaces}
We will need a specific choice of the Sobolev norms adapted to a particular sequence of vector bundles $L^p\otimes E, p\in \mathbb N$ as well as a slightly refined form of the Sobolev embedding theorem. In this section, we collect necessary  information, referring the reader to \cite{Kor91,ko-ma-ma,ma-ma15} for more details. We will keep the setting described in Introduction.

Recall that $dv_{X}$ denotes the Riemannian volume form of $(X,g)$. 
The $L^2$-norm is given by
\begin{equation}\label{e2.5}
\|u\|^2_{p,0}=\int_{X}|u(x)|^2dv_{X}(x), \quad u\in L^2(X,L^p\otimes E).
\end{equation}
For any integer $m>0$, we introduce the norm $\|\cdot\|_{p,m}$ 
on the Sobolev space $H^m(X,L^p\otimes E)$ of order $m$ by the formula
\begin{equation}\label{e2.6}
\|u\|_{p,m}=\left(\sum_{\ell=0}^m \int_{X} \left|\left(\tfrac{1}{\sqrt{p}}
\nabla^{L^p\otimes E}\right)^\ell u(x)\right|^2 dv_{X}(x)\right)^{1/2}.
\end{equation}

Denote by $\langle\cdot,\cdot\rangle_{p,m}$ the corresponding inner 
product on $H^m(X,L^p\otimes E)$. For any integer $m<0$, 
we define the norm in the Sobolev space $H^m(X,L^p\otimes E)$ by duality.
For any bounded linear operator 
$A : H^m(X,L^p\otimes E)\to H^{m^\prime}(X,L^p\otimes E)$, 
$m,m^\prime\in \mathbb Z$, we will denote its operator norm by 
$\|A\|^{m,m^\prime}_p$.

Denote by $C^\infty_b(X, L^p\otimes E)$ the space of smooth sections of $L^p\otimes E$ whose covariant derivatives of any order are uniformly bounded in $X$. So $u\in C^\infty_b(X, L^p_0\otimes E)$ if, for any $k\in \mathbb Z_+$, we have  
\[
\|u\|_{{C}^k_b}:=\sup_{x\in X}\left|\left(\nabla^{L^p\otimes E}\right)^{k}u(x)\right| <\infty.
\]

\begin{prop}[\cite{ma-ma15}, Lemma 2]\label{p:Sobolev}
For any $k, m\in \mathbb N$ with $m>k+n$, we have an embedding
\begin{equation}\label{e2.16}
H^m(X,L^p\otimes E)\subset {C}^k_b(X,L^p\otimes E).
\end{equation}
Moreover, there exists $C_{m,k}>0$ such that, for any $p\in \mathbb N$
and $u\in H^m(X,L^p\otimes E)$,
\begin{equation}\label{e2.17}
\|u\|_{{C}^k_b}\leq C_{m,k}p^{(n+k)/2}\|u\|_{p,m}.
\end{equation}
\end{prop}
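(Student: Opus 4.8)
The plan is to reduce the refined Sobolev embedding \eqref{e2.17} to the classical Sobolev embedding theorem on a ball of fixed radius, using the bounded geometry hypothesis to control all constants uniformly in $p$ and in the base point. The key point is to track how the rescaled connection $\tfrac{1}{\sqrt p}\nabla^{L^p\otimes E}$ interacts with a change of variables that absorbs the factor of $p$.

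First I would fix a radius $a\in(0,r_X)$ smaller than the injectivity radius and, for an arbitrary $x_0\in X$, pass to the normal coordinate chart $\varkappa_{x_0}$ together with the parallel-transport trivializations of $L$ and $E$ introduced in the Introduction, so that a section $u$ becomes a $C_{x_0}^r$-valued function $\tilde u$ on $B(0,a)\subset\RR^{2n}$ (here $r=\operatorname{rk}E$). In this trivialization the connection $\nabla^{L^p\otimes E}$ takes the form $d-ip\alpha+\Gamma^E$ as in \eqref{e:TDeltaT-D}, with connection one-forms $\alpha$, $\Gamma^E$ and all their derivatives bounded uniformly in $x_0$ by bounded geometry, and the volume density $\kappa$ is uniformly bounded above and below with uniformly bounded derivatives. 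Next I would perform the dilation $Z=p^{-1/2}W$, mapping $B(0,a)$ to $B(0,p^{1/2}a)$; under this substitution $\tfrac{1}{\sqrt p}\partial_{Z_j}$ becomes $\partial_{W_j}$, and the scaled connection one-forms become $\tfrac{1}{\sqrt p}\alpha(p^{-1/2}W)$ and $\tfrac{1}{\sqrt p}\Gamma^E(p^{-1/2}W)$, which together with all their $W$-derivatives remain bounded uniformly in $p$ and $x_0$ on the unit ball $B(0,1)\subset B(0,p^{1/2}a)$ (for $p\ge 1$). Thus on $B(0,1)$ the operator $\big(\tfrac{1}{\sqrt p}\nabla^{L^p\otimes E}\big)^\ell$ in the $W$-variable is a differential operator of order $\ell$ with coefficients bounded uniformly in $p$, $x_0$, so the local Sobolev norms $\|\cdot\|_{p,m}$ translate, up to a uniform constant, into the standard Sobolev norm $\|\cdot\|_{H^m(B(0,1))}$ of the rescaled function.

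Now I would apply the classical Sobolev embedding $H^m(B(0,1))\hookrightarrow C^k(B(0,1))$ for $m>k+n$ (here $B(0,1)\subset\RR^{2n}$, so $\dim=2n$, but the statement uses the half-dimension $n$; one must check the exponent — the relevant condition is $m>k+\dim/2=k+n$, which matches), obtaining a pointwise bound on $\tilde u$ and its $W$-derivatives up to order $k$ at $W=0$ by a uniform constant times $\|\cdot\|_{H^m(B(0,1))}$. Undoing the dilation, each $W$-derivative of order $\le k$ corresponds to $p^{-j/2}$ times a $Z$-derivative of order $j$, so translating back to the covariant derivatives $\big(\nabla^{L^p\otimes E}\big)^k u$ at $x_0$ produces the factor $p^{k/2}$; combined with the factor $p^{n/2}$ coming from the Jacobian $dv_X(Z)=\kappa(Z)\,dZ = \kappa(p^{-1/2}W) p^{-n} dW$ relating $\|u\|_{p,m}^2$ to $\int_{B(0,1)}|\tilde u(W)|^2\,dW$ plus higher terms, this yields exactly $\|u\|_{C^k_b}\le C_{m,k}\,p^{(n+k)/2}\|u\|_{p,m}$ after taking the supremum over $x_0\in X$. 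The main obstacle is the uniformity bookkeeping: one must verify that every constant produced by the change of variables, by the boundedness of the connection forms and the density $\kappa$, and by the classical embedding, is genuinely independent of both $p$ and $x_0$ — this is where the bounded geometry of $(X,g)$, $L$ and $E$ (uniform bounds on $R^{TX}$, $R^L$, $R^E$ and derivatives, plus $r_X>0$) is essential — and that the powers of $p$ from the dilation of the connection forms, the Jacobian, and the derivatives combine to give precisely the stated exponent $(n+k)/2$ with no residual powers of $p$.
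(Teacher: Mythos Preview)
The paper does not prove this proposition; it simply quotes it from \cite{ma-ma15}. Your rescaling argument is the standard one and is essentially correct, but there is one slip you should fix. After the dilation $Z=p^{-1/2}W$, the $L$-connection term does \emph{not} become $\tfrac{1}{\sqrt p}\alpha(p^{-1/2}W)$. Since $\nabla^{L^p\otimes E}=d-ip\,\alpha+\Gamma^E$, the scaled operator $\tfrac{1}{\sqrt p}\nabla^{L^p\otimes E}$ reads, in the $W$-variable,
\[
\partial_{W_j}-i\sqrt{p}\,\alpha_j(p^{-1/2}W)+\tfrac{1}{\sqrt p}\,\Gamma^E_j(p^{-1/2}W),
\]
so the $\alpha$-contribution carries a factor $\sqrt p$, not $\tfrac{1}{\sqrt p}$. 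The conclusion that the coefficients and all their $W$-derivatives are uniformly bounded on $B(0,1)$ nevertheless survives, but for a different reason than the one you give: in the parallel-transport (radial) trivialization one has $\alpha(0)=0$, hence $\alpha(Z)=O(|Z|)$ with uniformly bounded derivatives, and therefore $\sqrt{p}\,\alpha(p^{-1/2}W)=O(|W|)$ while $\partial_W^\beta\big[\sqrt{p}\,\alpha(p^{-1/2}W)\big]=p^{(1-|\beta|)/2}(\partial^\beta\alpha)(p^{-1/2}W)$ is uniformly bounded for every $|\beta|\ge 1$. You should insert this observation explicitly; without it the argument as written breaks, since a merely bounded $\alpha$ would give an unbounded term $\sqrt p\,\alpha$.

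The rest of your bookkeeping --- the Jacobian factor $p^{n}$ from $dZ=p^{-n}\,dW$ giving the $p^{n/2}$ in the norm comparison, the factor $p^{k/2}$ from undoing the dilation on the $k$th covariant derivative at the center, and the uniformity in $x_0$ coming from bounded geometry --- is correct and combines to the stated exponent $(n+k)/2$.
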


For any $y\in X$ and $v\in (L^p\otimes E)_y$, we define the delta-section 
$\delta_v\in \mathscr{C}^{-\infty}(X,L^p\otimes E)$ as a linear functional
on ${C}^{\infty}_c(X,L^p\otimes E)$ given by
\begin{equation}\label{e2.18}
\langle \delta_v, \varphi\rangle 
=\langle v, \varphi(x)\rangle_{h^{L^p\otimes E}}, 
\quad \varphi \in {C}^{\infty}_c(X,L^p\otimes E).
\end{equation}

\begin{prop}[\cite{ko-ma-ma}, Proposition 2.3]\label{p:delta}
For any $m>n$ and $v\in L^p\otimes E$, $\delta_v\in H^{-m}(X,L^p\otimes E)$ with the following norm estimate
\begin{equation}\label{e2.19}
\sup_{|v|=1}p^{-n/2}\|\delta_v\|_{p,-m}<\infty.
\end{equation}
\end{prop}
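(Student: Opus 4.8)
The plan is to unwind the definition of the negative-order Sobolev norm as a dual norm and reduce everything to the refined Sobolev embedding of Proposition~\ref{p:Sobolev} with $k=0$. Fix $y\in X$ and $v\in(L^p\otimes E)_y$ with $|v|=1$. By \eqref{e2.18}, for every $\varphi\in C^\infty_c(X,L^p\otimes E)$ one has $\langle\delta_v,\varphi\rangle=\langle v,\varphi(y)\rangle_{h^{L^p\otimes E}}$, so the fiberwise Cauchy--Schwarz inequality gives
\[
|\langle\delta_v,\varphi\rangle|\leq |v|\,|\varphi(y)|\leq \|\varphi\|_{C^0_b}.
\]

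Next I would apply Proposition~\ref{p:Sobolev} with $k=0$ (legitimate since $m>n=0+n$): there is a constant $C_{m,0}>0$, \emph{independent of} $p$, with $\|\varphi\|_{C^0_b}\leq C_{m,0}\,p^{n/2}\,\|\varphi\|_{p,m}$ for all $p\in\mathbb N$ and $\varphi\in H^m(X,L^p\otimes E)$. Combining the two estimates yields $|\langle\delta_v,\varphi\rangle|\leq C_{m,0}\,p^{n/2}\,\|\varphi\|_{p,m}$ for all $\varphi\in C^\infty_c(X,L^p\otimes E)$. Since $C^\infty_c(X,L^p\otimes E)$ is dense in $H^m(X,L^p\otimes E)$, the functional $\delta_v$ extends uniquely to a bounded linear functional on $H^m(X,L^p\otimes E)$, i.e. $\delta_v\in H^{-m}(X,L^p\otimes E)$, and by the definition of the dual norm
\[
\|\delta_v\|_{p,-m}=\sup_{0\neq\varphi\in H^m(X,L^p\otimes E)}\frac{|\langle\delta_v,\varphi\rangle|}{\|\varphi\|_{p,m}}\leq C_{m,0}\,p^{n/2}.
\]
Multiplying by $p^{-n/2}$ and taking the supremum over unit vectors $v$ gives $\sup_{|v|=1}p^{-n/2}\|\delta_v\|_{p,-m}\leq C_{m,0}<\infty$, which is \eqref{e2.19}.

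There is essentially no hard step here; the two points that require a word of care are that the pointwise evaluation $\varphi\mapsto\varphi(y)$ is well defined on $H^m(X,L^p\otimes E)$ precisely because $m>n=k+n$ with $k=0$, which is exactly the content of the embedding \eqref{e2.16}, and that the constant $C_{m,0}$ supplied by Proposition~\ref{p:Sobolev} does not depend on $p$ --- this $p$-uniformity (a consequence of the bounded-geometry hypotheses) is what survives the rescaling by $p^{-n/2}$ and yields the uniform bound. One could instead prove the estimate first on the model flat bundle over $T_yX$ by an explicit Gaussian computation and transfer it through the normal-coordinate trivialization, but routing through Proposition~\ref{p:Sobolev} is shorter and already packages this uniformity.
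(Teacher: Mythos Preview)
Your argument is correct: the estimate is exactly the dual statement to Proposition~\ref{p:Sobolev} with $k=0$, and your proof via $|\langle\delta_v,\varphi\rangle|\leq\|\varphi\|_{C^0_b}\leq C_{m,0}p^{n/2}\|\varphi\|_{p,m}$ is the natural one. Note that the present paper does not actually prove this proposition but only quotes it from \cite{ko-ma-ma}; the proof there proceeds in the same spirit, so there is nothing to compare.
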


\subsection{$L^2$-estimates}\label{s:L2estimates}
For the rest of this section, we fix some $\delta>0$ and $K>0$. Denote
\[
\Omega=\Omega_{\delta,K}=\{\lambda\in \CC : d(\lambda,\Sigma)>\delta, |\lambda|<K\}.
\]
By Theorem \ref{t:spectrum}, there exists $p_0\in \NN$ such that, for any $\lambda\in \Omega$ and $p>p_0$, the operator $\lambda-H_p$ is invertible  in $L^2(X,L^p\otimes E)$, and the resolvent $R(\lambda,H_p):=(\lambda-H_p)^{-1}$ satisfies the estimate
\begin{equation}\label{e:res1}
\left\|R(\lambda,H_p)\right\|^{0,0}_p\leq  \frac{1}{\delta}.
\end{equation}
By general elliptic theory, we know that the operator $R(\lambda,H_p)$ defines a bounded operator from  $H^m(X,L^p\otimes E)$ to $H^{m+2}(X,L^p\otimes E)$ for any $m\in \ZZ$.

\begin{prop}
There exists $C>0$ such that for all $\lambda\in \Omega$ and $p>p_0$ we have
\begin{equation}\label{e:res3}
\left\|R(\lambda,H_p)\right\|^{0,2}_p\leq  C. 
\end{equation}
\end{prop}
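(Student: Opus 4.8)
The plan is to prove the estimate $\|R(\lambda,H_p)\|_p^{0,2}\le C$ by combining the $L^2$ resolvent bound \eqref{e:res1} with a Gårding-type (elliptic) a priori estimate that is uniform in $p$ when the norms are rescaled as in \eqref{e2.6}. Concretely, for $u\in C_c^\infty(X,L^p\otimes E)$, I would first control the $\|\cdot\|_{p,1}$-norm: integration by parts gives
\begin{equation*}
\Big\|\tfrac{1}{\sqrt p}\nabla^{L^p\otimes E}u\Big\|_{p,0}^2=\langle H_p u,u\rangle_{p,0}-\langle Vu,u\rangle_{p,0}\le \|H_pu\|_{p,0}\|u\|_{p,0}+\|V\|_\infty\|u\|_{p,0}^2,
\end{equation*}
so that $\|u\|_{p,1}^2\le C(\|H_pu\|_{p,0}^2+\|u\|_{p,0}^2)$. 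Applying this to $u=R(\lambda,H_p)v$ and using $H_pu=\lambda u-v$ together with \eqref{e:res1} yields $\|R(\lambda,H_p)\|_p^{0,1}\le C$.

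Next I would upgrade this to the $\|\cdot\|_{p,2}$-estimate, which is the substantive step. The idea is to differentiate the equation $(\lambda-H_p)u=v$ once more in covariant directions. Writing $e_k$ for a local orthonormal frame and commuting $\tfrac{1}{\sqrt p}\nabla^{L^p\otimes E}_{e_k}$ through $H_p$, one produces an equation of the form $(\lambda-H_p)\big(\tfrac{1}{\sqrt p}\nabla_{e_k}u\big)=\tfrac{1}{\sqrt p}\nabla_{e_k}v+[\,\tfrac{1}{\sqrt p}\nabla_{e_k},H_p]u$. The key point is that, because of the normalization by $1/\sqrt p$ and the fact that $[\nabla^{L^p\otimes E}_{e_j},\nabla^{L^p\otimes E}_{e_k}]$ contributes a curvature term of size $O(p)$ (as in \eqref{e:commutator}), the commutator $[\,\tfrac{1}{\sqrt p}\nabla_{e_k},H_p]$ is a first-order operator whose coefficients are $O(1)$ in $p$ when expressed through $\tfrac{1}{\sqrt p}\nabla^{L^p\otimes E}$ — this is exactly the mechanism already exploited in the Lemma proving \eqref{e:ek-el} and in \eqref{e:res2-s1}--\eqref{e:ek-el-res} for the model operator. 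Hence $\|[\,\tfrac{1}{\sqrt p}\nabla_{e_k},H_p]u\|_{p,0}\le C\|u\|_{p,1}$, and another application of the $\|\cdot\|_{p,1}$-elliptic estimate (now to $\tfrac{1}{\sqrt p}\nabla_{e_k}u$) bounds $\|\tfrac{1}{\sqrt p}\nabla_{e_k}u\|_{p,1}$ in terms of $\|v\|_{p,1}+\|u\|_{p,1}$, which summing over $k$ gives $\|u\|_{p,2}\le C(\|v\|_{p,1}+\|v\|_{p,0})\le C\|v\|_{p,1}$. Since we have also shown $\|R(\lambda,H_p)\|_p^{0,1}\le C$, one actually gets $\|R(\lambda,H_p)\|_p^{0,2}\le C$ directly from $\|u\|_{p,2}\le C\|v\|_{p,1}$ by first passing through the intermediate estimate, or more cleanly by working with $v\in L^2$ throughout and tracking that only $\|v\|_{p,0}$ enters once the first-derivative bound is in hand. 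Bounded geometry guarantees that all metric and connection coefficients appearing in these manipulations, and their derivatives, are uniformly bounded on $X$, so the constants $C$ do not depend on $x_0$ or on $p$; uniformity in $\lambda\in\Omega$ comes from \eqref{e:res1} since $d(\lambda,\Sigma)>\delta$ is fixed.

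The main obstacle is making the commutator bookkeeping genuinely uniform in $p$: a naive differentiation of $(\lambda-H_p)u=v$ would produce terms like $\nabla_{e_j}\nabla_{e_k}u$ with an unfavorable power of $p$, and one must be careful to organize everything in terms of the semiclassically rescaled derivatives $\tfrac{1}{\sqrt p}\nabla^{L^p\otimes E}$ and to use that the scalar curvature $R^{TX}$, the zeroth-order term $V$, and the bundle curvature $R^E$ are $O(1)$ while only the magnetic curvature $R^L$ scales like $p$ — and the latter enters $H_p$ precisely through the combination $\tfrac1p\Delta^{L^p\otimes E}$ which is already $O(1)$ on the rescaled scale. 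This is the standard semiclassical mechanism, and since the statement is explicitly noted in the text to be a slight modification of \cite[Theorems 3.3-3.5]{Kor18} and \cite[Theorems 3.2-3.4]{ko-ma-ma} (themselves weighted analogues of \cite[Theorems 4.8-4.10]{dai-liu-ma}), I would present the argument as a direct adaptation of those references, emphasizing only the two elliptic estimates above and the uniform-in-$p$ control of the commutators, and refer to the cited works for the routine parts.
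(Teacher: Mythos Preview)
Your $H^1$-step is fine and matches the paper. The gap is in the $H^2$-step. Commuting $\tfrac{1}{\sqrt p}\nabla_{e_k}$ through the equation $(\lambda-H_p)u=v$ and then applying the $H^1$-elliptic estimate to $w=\tfrac{1}{\sqrt p}\nabla_{e_k}u$ forces the term $\tfrac{1}{\sqrt p}\nabla_{e_k}v$ onto the right-hand side, so what you actually prove is $\|u\|_{p,2}\le C\|v\|_{p,1}$, i.e.\ $\|R(\lambda,H_p)\|_p^{1,2}\le C$, not the claimed $\|R(\lambda,H_p)\|_p^{0,2}\le C$. Your remark that ``only $\|v\|_{p,0}$ enters once the first-derivative bound is in hand'' is not substantiated: the commutator scheme inherently differentiates $v$, and no amount of iterating the $H^0\to H^1$ bound removes that derivative. (Incidentally, the commutator $[\tfrac{1}{\sqrt p}\nabla_{e_k},H_p]$ is \emph{second}-order in $\tfrac{1}{\sqrt p}\nabla$, not first-order as you assert; the top-order piece $\tfrac{1}{\sqrt p}(e_k g^{j\ell})\,\tfrac{1}{\sqrt p}\nabla_{e_j}\tfrac{1}{\sqrt p}\nabla_{e_\ell}$ has $O(p^{-1/2})$ coefficients, so it could be absorbed for large $p$, but this still does not fix the $\nabla v$ problem.)

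The paper avoids differentiating $v$ by a quadratic-inequality trick. It writes $\sum_{j,k}\|\tfrac{1}{p}\nabla_{e_j}\nabla_{e_k}Ru\|^2$ as $\sum_k\langle \tfrac1p\Delta^{L^p\otimes E}\,\tfrac{1}{\sqrt p}\nabla_{e_k}Ru,\ \tfrac{1}{\sqrt p}\nabla_{e_k}Ru\rangle$, commutes $\tfrac{1}{\sqrt p}\nabla_{e_k}$ past $\tfrac1p\Delta^{L^p\otimes E}$, and then integrates the surviving $\tfrac{1}{\sqrt p}\nabla_{e_k}$ \emph{back onto the other factor} via \eqref{e:adj}. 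In the main term this produces $\langle \tfrac1p\Delta^{L^p\otimes E} Ru,\ (\tfrac{1}{\sqrt p}\nabla_{e_k})^*\tfrac{1}{\sqrt p}\nabla_{e_k}Ru\rangle$, bounded by $C\|v\|_{p,0}\,\|Ru\|_{p,2}$ using only \eqref{e:Delta-estimate}; the commutator term is likewise controlled by $C\|v\|_{p,0}\,\|Ru\|_{p,2}$. One obtains the quadratic inequality $\|Ru\|_{p,2}^2\le C\|Ru\|_{p,2}\|v\|_{p,0}+C\|v\|_{p,0}^2$, and Young's inequality finishes. Transferring the extra derivative to $Ru$ rather than to $v$ is the missing idea in your argument.
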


\begin{proof}
First, we observe that by the definition of the Bochner Laplacian \eqref{e:def-Bochner},
\begin{equation}\label{e:subelliptic-1}
\left\| \nabla^{L^p\otimes E}u\right\|^2=\langle \Delta^{L^p\otimes E}u, u\rangle.
\end{equation}

Using \eqref{e:res1},  we obtain the estimate
\begin{equation}\label{e:Delta-estimate}
\left\|\frac{1}{p}\Delta^{L^p\otimes E}R(\lambda,H_p)\right\| 
= \left\|(\lambda-V) R(\lambda,H_p)+1\right\|\leq C.
\end{equation}
By \eqref{e:res1}, \eqref{e:subelliptic-1} and \eqref{e:Delta-estimate}, we obtain an estimate for the 
$H^1$-norm of $R(\lambda,H_p) u$:
\begin{multline}\label{e:1-estimate}
\left\|R(\lambda,H_p) u\right\|^2_{p,1}=\left\|\frac{1}{\sqrt{p}} \nabla^{L^p\otimes E}R(\lambda,H_p) u\right\|^2+\|R(\lambda,H_p) u\|^2\\ = \left\langle \frac{1}{p} \Delta^{L^p\otimes E}R(\lambda,H_p) u,R(\lambda,H_p) u\right\rangle +\|R(\lambda,H_p)u\|^2\leq C\|u\|^2. 
\end{multline}
Next, we estimate the $H^2$-norm of $R(\lambda,H_p) u$. We will use an equivalent definition of the Sobolev norms, given in terms of local coordinates. For any $x_0\in X$, we will consider normal coordinates $\gamma_{x_0}$  and trivializations of the bundles $L$ and $E$ defined on $B^{X}(x_0,\varepsilon)$ as in Introduction. We still denote by $e_{j}$ the constant vector field $e_j(Z)=e_j, j=1,\ldots,2n$ on $B^{T_{x_0}X}(0,\varepsilon)$. One can show that the restriction of the norm $\|\cdot\|_{p,m}$ to ${C}^\infty_c(B^{T_{x_0}X}(0,\varepsilon), L^p\otimes E)
\cong {C}^\infty_c(B^{X}(x_0,\varepsilon), L^p\otimes E)$ 
is equivalent uniformly on $x_0\in X$ and $p\in \mathbb N$
to the norm $\|\cdot \|^\prime_{p,m}$ given for 
$u\in {C}^\infty_c(B^{T_{x_0}X}(0,\varepsilon), L^p\otimes E)$
by
\begin{equation}\label{localSobolev}
\|u\|^\prime_{p,m}=\left(\sum_{\ell=0}^m\sum_{j_1,\ldots,j_\ell=1}^{2n}
\int_{X_0} \Big(\tfrac{1}{\sqrt{p}}\Big)^\ell|
\nabla^{L_0^p\otimes E_0}_{e_{j_1}}\cdots 
\nabla^{L_0^p\otimes E_0}_{e_{j_\ell}}u|^2dZ\right)^{1/2}.
\end{equation}
That is, there exists $C_m>0$ such that, for any $x_0\in X$, 
$p\in \mathbb N$ 
we have
\begin{equation}\label{pm-prime}
C_m^{-1}\|u\|^\prime_{p,m}\leq \|u\|_{p,m}\leq C_m\|u\|^\prime_{p,m}\,,
\end{equation}
for any $u\in {C}^\infty_c(B^{T_{x_0}X}(0,\varepsilon),
L^p\otimes E)
\cong {C}^\infty_c(B^{X}(x_0,\varepsilon), L^p\otimes E)$.
By choosing an appropriate covering of $X$ by normal coordinate charts, 
we can reduce our considerations to the local setting. Without loss
of generality, we can assume that 
$u\in {C}^\infty_c(B^{T_{x_0}X}(0,\varepsilon), L^p\otimes E)$
for some $x_0\in X$ and the Sobolev norm of $u$ is given by
the norm $\|u\|^\prime_{p,m}$ given by \eqref{localSobolev}.
(Later on, we omit `prime' for simplicity.)

One can write
\begin{equation}\label{Delta-p}
\Delta^{L^p\otimes E}=-\sum_{j,k=1}^{2n}g^{jk}(Z)\left[\nabla^{L^p\otimes E}_{e_j}
\nabla^{L^p\otimes E}_{e_k}- \sum_{\ell =1}^{2n} \Gamma^{\ell}_{jk}(Z)
\nabla^{L^p\otimes E}_{e_\ell}\right],
\end{equation}
where $(g^{jk}(Z))$ is the inverse of the metric tensor and the functions $\Gamma^{\ell}_{jk}, 
$ are defined by $\nabla^{TX}_{e_j}e_k=\sum_{\ell}\Gamma^{\ell}_{jk}e_\ell$. We also
observe that 
\begin{equation}\label{e:adj}
(\nabla^{L^p\otimes E}_{e_k})^*=-\nabla^{L^p\otimes E}_{e_k}+f_k
\end{equation}
for any $k=1,\ldots,2n$ with some $f_k\in C^\infty(X)$. 

By \eqref{e:1-estimate} and \eqref{e:subelliptic-1}, we get
\begin{multline*}
\|R(\lambda,H_p) u\|^2_{p,2}\leq C_1\sum_{j,k=1}^{2n}\left\|\frac{1}{p} \nabla^{L^p\otimes E}_{e_j} \nabla^{L^p\otimes E}_{e_k}R(\lambda,H_p) u\right\|^2+\|R(\lambda,H_p) u\|^2_{p,1}\\
\leq C_1\sum_{k=1}^{2n} \left\langle \frac{1}{p} \Delta^{L^p\otimes E}\frac{1}{\sqrt{p}}\nabla^{L^p\otimes E}_{e_k} R(\lambda,H_p) u, \frac{1}{\sqrt{p}} \nabla^{L^p\otimes E}_{e_k} R(\lambda,H_p) u\right\rangle +C_2\|u\|^2. 
\end{multline*}
The first term in the right-hand side of the last inequality can be written as follows:
\begin{multline*}
\sum_{k=1}^{2n} \left\langle \frac{1}{p} \Delta^{L^p\otimes E}\frac{1}{\sqrt{p}}\nabla^{L^p\otimes E}_{e_k} R(\lambda,H_p) u, \frac{1}{\sqrt{p}} \nabla^{L^p\otimes E}_{e_k} R(\lambda,H_p) u\right\rangle \\  =\sum_{k=1}^{2n} \left\langle \frac{1}{\sqrt{p}}\nabla^{L^p\otimes E}_{e_k} \frac{1}{p} \Delta^{L^p\otimes E} R(\lambda,H_p) u, \frac{1}{\sqrt{p}} \nabla^{L^p\otimes E}_{e_k} R(\lambda,H_p) u\right\rangle \\
+\sum_{k=1}^{2n} \left\langle \left[\frac{1}{p} \Delta^{L^p\otimes E}, \frac{1}{\sqrt{p}}\nabla^{L^p\otimes E}_{e_k}\right] R(\lambda,H_p) u, \frac{1}{\sqrt{p}} \nabla^{L^p\otimes E}_{e_k} R(\lambda,H_p) u\right\rangle \\ = I_{1}+I_{2}.
\end{multline*}
For the $I_{1}$-term, using \eqref{e:Delta-estimate}, \eqref{e:1-estimate} and \eqref{e:adj}, we get
\begin{align*}
I_{1}= & \sum_{k=1}^{2n} \left\langle \frac{1}{p} \Delta^{L^p\otimes E} R(\lambda,H_p) u, \left(\frac{1}{\sqrt{p}}\nabla^{L^p\otimes E}_{e_k}\right)^*\frac{1}{\sqrt{p}} \nabla^{L^p\otimes E}_{e_k} R(\lambda,H_p) u\right\rangle \\
\leq & C_3\left\|R(\lambda,H_p) u\right\|_{p,2} \|u\|.
\end{align*}

The commutator $\left[\frac{1}{p}\Delta^{L^p\otimes E}, \frac{1}{\sqrt{p}}\nabla^{L^p\otimes E}_{e_{k}}\right]$ is a second order differential operator whose coefficients are uniformly bounded in $x_0\in X$ (cf. \eqref{e:comm} below). By \eqref{e:1-estimate}, it follows that 
\[
I_{2}\leq C_4\left\|R(\lambda,H_p) u\right\|_{p,2} \|u\|.
\]

Combining the above estimates, we conclude that
\[
\|R(\lambda,H_p) u\|^2_{p,2}\leq C_5\left\|R(\lambda,H_p) u\right\|_{p,2} \|u\|+C_6\|u\|^2.
\]
Applying the inequality $ab\leq \frac 12(\epsilon^2a^2+\epsilon^{-2}b^2)$ with a suitable $\epsilon>0$ to the first term in the right-hand side of this inequality, we complete the proof.
\end{proof}

\subsection{Weighted estimates}
We will consider a sequence of  weight functions $\Phi_p\in C^\infty(X)$, $p\in \NN$, satisfying the following condition: for any integer $k>0$ there exists $C_k>0$ such that
\begin{equation}\label{e:chi-p}
\left(\frac{1}{\sqrt{p}}\right)^{k-1}\left|\nabla^k\Phi_p(x)\right|<C_k, \quad x\in X. 
\end{equation} 

Define a family of differential operators on $C^\infty(X,L^p\otimes E)$ by
\begin{align}\label{e:weight-operator}
H_{p;\alpha}:= e^{\alpha \Phi_p} H_p e^{-\alpha \Phi_o}, \quad \alpha\in \RR.
\end{align}
An easy computation gives that
\begin{equation}\label{e:DpaW}
H_{p;\alpha}=H_p+\frac 1p(\alpha A_{p}+\alpha^2B_{p}),
\end{equation}
where 
\begin{equation} \label{e:ApBp}
A_{p}=-2\nabla \Phi_p\cdot \nabla^{L^p\otimes E} +\Delta \Phi_p,, \quad B_{p}=-|\nabla \Phi_p|^2.
\end{equation}
From \eqref{e:ApBp}, we immediately infer that, for any $m\in \mathbb N$, 
there exists $C_m>0$ such that, for any $p\in \mathbb N$
and $u\in H^{m}(X,L^p\otimes E)$,
\begin{equation}\label{e:Sobolev-mapping}
\|A_{p}u\|_{p,m-1}\leq C_mp^{1/2}\|u\|_{p,m},\quad \|B_{p}u\|_{p,m}
\leq C_m\|u\|_{p,m}.
\end{equation}
Moreover, $C_m$ depends on $C^{m+2}$-norm of $\Phi_p$ in \eqref{e:chi-p}.

The following theorem is a refinement of \cite[Theorem 3.4]{ko-ma-ma}. Recall that $p_0\in \NN$ is defined in the beginning of Section \ref{s:L2estimates}. 

\begin{thm}\label{Thm1.9a}
There exists $c_0>0$ such that, for any 
$p\in \mathbb N$, $p>p_0$, $\lambda\in \Omega$,  
and $|\alpha|<c_0\sqrt{p}$,
the operator $\lambda-H_{p;\alpha}$
is invertible in $L^2(X, L^p\otimes E)$. Moreover,  for any $m\in \mathbb N$, 
the resolvent $\left(\lambda-H_{p;\alpha}\right)^{-1}$ 
maps $H^m(X, L^p\otimes E)$ to $H^{m+2}(X, L^p\otimes E)$ with the following norm estimates:
\begin{equation}\label{e:mm+2}
\left\|\Big(\lambda-H_{p;\alpha}\Big)^{-1}
\right\|_{p}^{m,m+2}\leq C_m,
\end{equation}
where $C_m>0$ is independent of $p\in \mathbb N$, $p>p_0$, $\lambda\in \Omega$,  and $|\alpha|<c_0\sqrt{p}$.
\end{thm}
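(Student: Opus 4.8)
The strategy is to treat $H_{p;\alpha}$ as a perturbation of $H_p$ and use the resolvent identity, exploiting the $L^2$-estimate \eqref{e:res1} for $R(\lambda,H_p)$ together with the $H^0\to H^2$ bound \eqref{e:res3} and the mapping properties \eqref{e:Sobolev-mapping} of $A_p$ and $B_p$. First I would establish invertibility in $L^2$. By \eqref{e:DpaW} we have $\lambda-H_{p;\alpha}=(\lambda-H_p)\big(I-R(\lambda,H_p)\tfrac 1p(\alpha A_p+\alpha^2 B_p)\big)$, so it suffices to bound the operator norm of $R(\lambda,H_p)\tfrac 1p(\alpha A_p+\alpha^2 B_p)$ on $L^2$ by $1/2$. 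The term $\tfrac 1p\alpha^2 B_p$ is bounded on $L^2$ by $C\alpha^2/p$ (since $B_p=-|\nabla\Phi_p|^2$ is multiplication by a function bounded by \eqref{e:chi-p} with $k=1$), which is $\leq C c_0^2$, small for $c_0$ small. For the first-order term $\tfrac 1p\alpha A_p$, I would not bound it directly on $L^2$ (it is only $H^1\to L^2$); instead write $R(\lambda,H_p)\tfrac 1p\alpha A_p = \alpha\,R(\lambda,H_p)\big(-\tfrac 2p\nabla\Phi_p\cdot\nabla^{L^p\otimes E}+\tfrac 1p\Delta\Phi_p\big)$ and use that $\tfrac{1}{\sqrt p}\nabla^{L^p\otimes E}R(\lambda,H_p)$ is bounded on $L^2$ (this is exactly the content of the $H^1$-estimate \eqref{e:1-estimate}, i.e. $\|R(\lambda,H_p)u\|_{p,1}\le C\|u\|$, combined with self-adjointness to move one derivative onto the resolvent), so that $\|R(\lambda,H_p)\tfrac 1p\alpha A_p\|^{0,0}_p\leq C|\alpha|/\sqrt p\leq Cc_0$. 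Hence for $c_0$ small enough the Neumann series converges and $\lambda-H_{p;\alpha}$ is invertible, with $\|(\lambda-H_{p;\alpha})^{-1}\|^{0,0}_p\leq C$.

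Next I would upgrade to the $\|\cdot\|^{0,2}_p$ estimate. Apply $(\lambda-H_{p;\alpha})^{-1}=(I-R(\lambda,H_p)\tfrac 1p(\alpha A_p+\alpha^2 B_p))^{-1}R(\lambda,H_p)$: the rightmost factor maps $L^2\to H^2$ with bound $C$ by \eqref{e:res3}, and I must check that $(I-R(\lambda,H_p)\tfrac 1p(\alpha A_p+\alpha^2 B_p))^{-1}$ is bounded on $H^2$ uniformly. For this I need $R(\lambda,H_p)\tfrac 1p(\alpha A_p+\alpha^2 B_p)$ to be bounded on $H^2$ with norm $\leq 1/2$; the mapping property \eqref{e:Sobolev-mapping} gives $\|\tfrac 1p\alpha A_p u\|_{p,1}\leq C|\alpha|p^{-1/2}\|u\|_{p,2}$ and $\|\tfrac 1p\alpha^2 B_p u\|_{p,2}\leq C\alpha^2 p^{-1}\|u\|_{p,2}$, while $R(\lambda,H_p)$ maps $H^1\to H^2$ (and $H^2\to H^2$) with a uniform bound — the former by a minor variant of the proof of \eqref{e:res3}, the latter by elliptic regularity plus \eqref{e:res1}. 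Combining, the composed norm on $H^2$ is $\leq Cc_0$, again $<1/2$ for $c_0$ small. This yields \eqref{e:mm+2} for $m=0$.

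For general $m$ I would induct on $m$, or more cleanly repeat the Neumann-series argument in $H^m$ once I know $R(\lambda,H_p)$ maps $H^m\to H^{m+2}$ with uniform bound $C_m$. The latter is obtained by the same commutator bookkeeping as in the proof of \eqref{e:res3}: differentiate the equation $(\lambda-H_p)v=u$ up to order $m$, commute the $\tfrac 1p\nabla^{L^p\otimes E}_{e_k}$'s through $\tfrac 1p\Delta^{L^p\otimes E}$ using that $[\tfrac 1p\Delta^{L^p\otimes E},\tfrac{1}{\sqrt p}\nabla^{L^p\otimes E}_{e_k}]$ is a second-order operator with coefficients (and all derivatives) uniformly bounded in $x_0$ and $p$ — this uses bounded geometry and the curvature identity in the same spirit as \eqref{e:commutator} — and absorb the top-order term by the $ab\le\tfrac12(\epsilon^2a^2+\epsilon^{-2}b^2)$ trick. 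Then $\|R(\lambda,H_p)\tfrac 1p(\alpha A_p+\alpha^2 B_p)\|^{m,m}_p\leq Cc_0$ via \eqref{e:Sobolev-mapping}, the Neumann series converges in $H^m$, and composing with $R(\lambda,H_p):H^m\to H^{m+2}$ gives \eqref{e:mm+2}. The main obstacle is the bookkeeping for the higher-order commutators: one must verify that all the coefficients produced by commuting powers of $\tfrac{1}{\sqrt p}\nabla^{L^p\otimes E}$ through the operator (including terms from the Christoffel symbols $\Gamma^\ell_{jk}$, the adjoint correction $f_k$ in \eqref{e:adj}, and the derivatives of $\Phi_p$ weighted as in \eqref{e:chi-p}) carry the correct powers of $p$, so that no term grows with $p$; this is routine given bounded geometry but is where all the care is needed, and it is essentially the content already worked out in \cite[Theorem 3.4]{ko-ma-ma}, of which this theorem is a mild refinement.
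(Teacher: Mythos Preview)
Your proposal is correct and follows the same perturbation-plus-commutator strategy as the paper. Two minor organizational differences are worth noting: the paper factors on the other side, bounding $(H_{p;\alpha}-H_p)R(\lambda,H_p)$ on $L^2$, which avoids your duality step since $\tfrac{1}{p}\alpha A_p\,R(\lambda,H_p)$ is controlled directly by $\|R(\lambda,H_p)\|^{0,1}_p$ via \eqref{e:Sobolev-mapping}; and for the higher-$m$ step the paper inducts directly on $R(\lambda,H_{p;\alpha})$ using the commutator identity
\[
\tfrac{1}{\sqrt p}\nabla^{L^p\otimes E}_{e_j}\,R(\lambda,H_{p;\alpha})
=R(\lambda,H_{p;\alpha})\,\tfrac{1}{\sqrt p}\nabla^{L^p\otimes E}_{e_j}
+R(\lambda,H_{p;\alpha})\Big[\tfrac{1}{\sqrt p}\nabla^{L^p\otimes E}_{e_j},H_{p;\alpha}\Big]R(\lambda,H_{p;\alpha}),
\]
rather than first proving $\|R(\lambda,H_p)\|^{m,m+2}_p\le C_m$ and then transferring to $H_{p;\alpha}$ via a Neumann series at each level as you propose. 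Both routes rely on the same commutator bookkeeping (that $[\tfrac{1}{\sqrt p}\nabla^{L^p\otimes E}_{e_j},H_{p;\alpha}]$ is second order with coefficients uniformly bounded in $p$ and $|\alpha|<c_0\sqrt p$) and are equivalent in substance; the paper's direct induction is slightly more economical since it never needs the intermediate $H^m\to H^{m+2}$ statement for $H_p$ separately.
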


\begin{proof}
As above, we denote $R(\lambda, H_p)=(\lambda-H_p)^{-1}$.
We can write
\[
\Big(\lambda-H_{p;\alpha}\Big)R(\lambda,H_p)=1-(H_{p;\alpha}-H_p)R(\lambda,H_p).
\]

By \eqref{e:DpaW}, \eqref{e:Sobolev-mapping} and \eqref{e:res3}, 
it follows that, for all $\lambda\in \Omega$, $p\in \mathbb N$ and
$\alpha\in \mathbb R$, we have
\begin{multline}
\left\|(H_{p;\alpha}-H_p)R(\lambda,H_p)\right\|^{0,0}_p\\
\leq  C\left(\frac{|\alpha|}{\sqrt{p}}
\left\|R(\lambda,H_p)\right\|^{0,1}_p
+\frac{\alpha^2}{p}\left\|R(\lambda,H_p
\right\|^{0,0}_p\right) \leq C\left(\frac{|\alpha|}{\sqrt{p}}+\frac{\alpha^2}{p}\right),
\end{multline}
where $C>0$ is some constant.

From now on, we will assume that $c_0>0$ satisfies $C(c_0+c_0^2)<\frac 12$. 
Then, if $|\alpha|<c_0\sqrt{p}$, we have
\begin{equation}\label{e:d}
\left\|(H_{p;\alpha}-H_p)
R(\lambda,H_p)\right\|^{0,0}_p
<\frac 12.
\end{equation}
Therefore, for all $\lambda\in \Omega$, $p\in \mathbb N$, 
$\alpha\in \mathbb R$, and $|\alpha|<c\sqrt{p}$,  
the operator $\lambda-H_{p;\alpha}$ is invertible in 
$L^2(X,L^p\otimes E)$, and, for $R(\lambda,H_{p;\alpha}):=(\lambda-H_{p;\alpha})^{-1} $,  we have
\begin{equation}\label{e:res}
R(\lambda,H_{p;\alpha})= R(\lambda,H_p)+R(\lambda, H_{p;\alpha})(H_{p;\alpha}-H_p)R(\lambda, H_p).
\end{equation}
By general elliptic theory, we know that the operator $R(\lambda,H_{p;\alpha})$ defines a bounded operator from  $H^m(X,L^p\otimes E)$ to $H^{m+2}(X,L^p\otimes E)$ for any $m\in \ZZ$. It remains to prove \eqref{e:mm+2}.

By \eqref{e:d} and \eqref{e:res}, we get
\begin{multline*}
\left\|R(\lambda,H_{p;\alpha})\right\|^{0,2}_p\leq 
\left\|R(\lambda,H_p)\right\|^{0,2}_p
+\left\| R(\lambda,H_{p;\alpha})
\right\|^{0,2}_p\left\|(H_{p;\alpha}-H_p)R(\lambda,H_p)
\right\|^{0,0}_p \\ 
\leq  \left\|R(\lambda,H_p)\right\|^{0,2}_p
+\frac 12\left\| R(\lambda,H_{p;\alpha})
\right\|^{0,2}_p,
\end{multline*}
that gives 
\[
\left\|R(\lambda,H_{p;\alpha})\right\|^{0,2}_p\leq  2\left\|R(\lambda,H_p)\right\|^{0,2}_p
\]
and, by \eqref{e:res3}, proves \eqref{e:mm+2} for $m=0$.

Now we again will work locally. Let $\{e_j\}$ be a local frame of vector fields on $X$.
By \eqref{localSobolev}, we see that for any $k\geq 1$ there exists $C_k>0$ such that 
\begin{equation}\label{e:k-k-1}
\|v\|_{p,k} \leq C_k\left(\sum_{j=1}^{2n}\left\|\left(\frac{1}{\sqrt{p}}\nabla^{L^p\otimes E}_{e_{j}}\right)v\right\|_{p,k-1}+\|v\|_{p,k-1}\right) 
\end{equation}
for any $v\in {C}^\infty_c(X, L^p\otimes E).$

For any $1\leq j \leq 2n$ and $u\in {C}^\infty_c(X, L^p\otimes E).$, we can write 
\begin{multline}\label{e:estj-k}
\Big(\frac{1}{\sqrt{p}}\nabla^{L^p\otimes E}_{e_{j}}\Big) 
R(\lambda,H_{p;\alpha})u=R(\lambda,H_{p;\alpha})\Big(\frac{1}{\sqrt{p}}\nabla^{L^p\otimes E}_{e_{j}}\Big)u\\ +R(\lambda,H_{p;\alpha})\left[\frac{1}{\sqrt{p}}\nabla^{L^p\otimes E}_{e_{j}}, H_{p;\alpha}\right]  R(\lambda,H_{p;\alpha})u.  
\end{multline}
that gives the estimate
\begin{multline}\label{e:estj-k-1}
\left\|\left(\tfrac{1}{\sqrt{p}}\nabla^{L^p\otimes E}_{e_{j}}\right) 
R(\lambda,H_{p;\alpha})u\right\|_{p.m+1}\\ 
\begin{aligned}
\leq & \left\|R(\lambda,H_{p;\alpha})\right\|^{m-1,m+1}_{p}\left\|\left(\tfrac{1}{\sqrt{p}}\nabla^{L^p\otimes E}_{e_{j}}\right)u\right\|_{p,m-1}\\ & +\left\|R(\lambda,H_{p;\alpha})\right\|_p^{m-1.m+1} \left\|\left[\tfrac{1}{\sqrt{p}}\nabla^{L^p\otimes E}_{e_{j}}, H_{p;\alpha}\right]\right\|_p^{m+1.m-1}\times \\ & \times \left\|R(\lambda,H_{p;\alpha})\right\|_p^{m-1.m+1} \|u\|_{p,m-1}. 
\end{aligned} 
\end{multline}

As in \cite[Proposition 3.5]{ko-ma-ma}, for any $1\leq j \leq 2n$, the commutator $\left[\frac{1}{\sqrt{p}}\nabla^{L^p\otimes E}_{e_{j}}, \frac{1}{p}\Delta_{p;\alpha}\right]$ is a second order differential operator of the form
\begin{multline}\label{e:comm}
\left[\frac{1}{\sqrt{p}}\nabla^{L^p\otimes E}_{e_{j}}, 
H_{p;\alpha}\right]=\sum_{i,j}\tilde a^{ij}_{p;\alpha}(Z)
\Big(\frac{1}{\sqrt{p}}\nabla^{L^p\otimes E}_{e_i}\Big)
\Big(\frac{1}{\sqrt{p}}\nabla^{L^p\otimes E}_{e_j}\Big)\\
+\sum_{\ell}\tilde a^{\ell}_{p;\alpha}(Z)\frac{1}{\sqrt{p}}
\nabla^{L^p\otimes E}_{e_\ell}+\tilde a_{p;\alpha}(Z),
\end{multline}
whose coefficients $\tilde a_{p;\alpha}^{ij}$, $\tilde a_{p;\alpha}^\ell$ and $\tilde a_{p;\alpha}$, bounded uniformly on $p\in \mathbb N$ and $|\alpha|<c\sqrt{p}$.

Using \eqref{e:k-k-1}, \eqref{e:estj-k-1} and \eqref{e:comm}, we infer that for any $m\geq 1$
\begin{multline}
\|R(\lambda,H_{p;\alpha})\|^{m, m+2}_{p}\\ \leq C_m\left(\|R(\lambda,H_{p;\alpha})\|^{m-1,m+1}_{p}+(\|R(\lambda,H_{p;\alpha})\|_p^{m-1.m+1})^2\right),
\end{multline}
that proves recursively \eqref{e:mm+2} for all $m\geq 0$. 
\end{proof}

\section{Spectral projection}\label{s:projection}

Let us consider an interval $I=(\alpha,\beta)$ such that $\alpha,\beta\not \in \Sigma$.  Let $P_{p,I}$ be the spectral projection of the operator $H_{p}$ associated with $I$  and $P_{p,I}(x,x^\prime)$, $x,x^\prime\in X$, be its smooth kernel with respect to the Riemannian volume form $dv_X$. 
In this section, we will study the asymptotic behavior of $P_{p,I}(x,x^\prime)$ as $p\to \infty$. 

 By Theorem \ref{t:spectrum}, there exists $\mu_0>0$ and $p_0\in \NN$ such that for any $p>p_0$ 
\[
\sigma(H_{p})\subset (-\infty, \alpha-\mu_0) \cup I \cup (\beta+\mu_0, \infty).
\]
Let $\Gamma$ be the boundary of the rectangle $\Pi=(\alpha-\mu_0/2,\beta+\mu_0/2)+i(-\mu_0/2, \mu_0/2)$ in $\CC$ counterclockwise oriented.  
Then for any integer $m>0$ and $p>p_0$, we can write
\begin{equation}\label{e:Pqp-integral}
P_{p,I}=\frac{1}{2\pi i}
\int_\Gamma \lambda^{m-1}(\lambda-H_{p})^{-m}d\lambda. 
\end{equation}

\subsection{Off-diagonal estimates}\label{s:far-off-diagonal}
The proof of Theorem~\ref{t:exp-Pp} closely follows the proof of \cite[Theorem 1.2]{ko-ma-ma}, so we just give a short outline.

As shown in \cite[Proposition 4.1]{Kor91} (see also \cite[Section 3.1]{ko-ma-ma}), for any $p\in \mathbb N$, there exists a function $\widetilde{d}_p$, satisfying the following conditions:

(1) we have
\begin{equation}\label{(1.1)}
\vert \widetilde{d}_p(x,y) - d (x,y)\vert  < \frac{1}{\sqrt{p}}\;,
\quad x, y\in X;
\end{equation}

(2) for any $k>0$, there exists $c_k>0$ such that
\begin{equation}
\label{dist}
\left(\frac{1}{\sqrt{p}}\right)^{k-1} \left| \nabla^k_{x}
\widetilde{d}_p(x,y)\right| < c_{k}\:,\quad x, y\in X.
\end{equation}
 We get a family $\{\widetilde{d}_{p,y} : y\in X\}$  of weight functions on $X$ given by
\begin{equation}\label{e:3.10}
\widetilde{d}_{p,y}(x) = \widetilde{d}_p(x,y), \quad  x\in X,
\end{equation}
which satisfy \eqref{e:chi-p} uniformly on $y\in X$.

As in \eqref{e:weight-operator}, consider the family of differential operators
\[
H_{p;\alpha,y}:= e^{\alpha \widetilde{d}_{p,y}}
H_{p} e^{-\alpha \widetilde{d}_{p,y}} \quad \alpha\in \RR, \quad y\in X.
\]
By Theorem~\ref{Thm1.9a},  we get Sobolev norm estimates, uniform in $y$, for the operator $(\lambda-H_{p;\alpha,y})^{-m}$  for any $m\in \NN$. Next, we derive pointwise exponential estimates for the  Schwartz kernel of this operator and its derivatives of an arbitrary order, using a refined form of 
the Sobolev embedding theorem stated in Propositions~\ref{p:Sobolev} and \ref{p:delta}. 
Finally, we use the formula \eqref{e:Pqp-integral} to complete the proof of Theorem~\ref{t:exp-Pp}.

\subsection{Localization of the problem}\label{local}
Next, we localize the problem, slightly modifying the constructions of \cite[Sections 1.1 and 1.2]{ma-ma08}. 

We fix $x_0\in X$. We will use normal coordinates and trivializations of the bundles $L$ and $E$ defined on $B^{X}(x_0,\varepsilon)$ as in Introduction. The fixed orthonormal basis $\{e_j\}$ of $T_{x_0}X$ gives rise to an isomorphism $X_0:=\mathbb R^{2n}\cong T_{x_0}X$. Consider the trivial bundles $L_0$ and $E_0$  on $X_0$ with fibers $L_{x_0}$ and $E_{x_0}$, respectively. The above identifications induce the Riemannian metric $g$ on $B^{T_{x_0}X}(0,\varepsilon)$ as well as the connections $\nabla^L$ and $\nabla^E$ and the Hermitian metrics $h^L$ and $h^E$ on the restrictions of $L_0$ and $E_0$ to $B^{T_{x_0}X}(0,\varepsilon)$. In particular, $h^L$, $h^E$ are the constant metrics $h^{L_0}=h^{L_{x_0}}$, $h^{E_0}=h^{E_{x_0}}$. For some $\varepsilon \in (0,r_X/4)$, which will be fixed later, we extend these geometric objects from $B^{T_{x_0}X}(0,\varepsilon)$ to $X_0\cong T_{x_0}X$ in the following way.  

Let $\rho : \mathbb R\to [0,1]$ be a smooth even function such that $\rho(v)=1$ if $|v|<2$ and $\rho(v)=0$ if $|v|>4$. Let $\varphi_\varepsilon : \mathbb R^{2n}\to \mathbb R^{2n}$ be the map defined by $\varphi_\varepsilon(Z)=\rho(|Z|/\varepsilon)Z$. Set $\nabla^{E_0}=\varphi^*_\varepsilon\nabla^E$. Define a Hermitian connection $\nabla^{L_0}$ on $(L_0,h^{L_0})$ by 
\[
\nabla^{L_0}_u=\nabla^L_{d\varphi_\varepsilon(Z)(u)}+\frac 12(1-\rho^2(|Z|/\varepsilon))R^L_{x_0}(\mathcal R(Z),u). \quad Z\in X_0, \quad u\in T_ZX_0
\]
where we use the canonical isomorphism $X_0\cong T_ZX_0$ and $\mathcal R(Z)=\sum_{j=1}^{2n} Z_je_j\in \mathbb R^{2n}\cong T_ZX_0$. Its curvature is given by \cite[(1.22)]{ma-ma08}
\begin{equation}\label{e:RL0}
\begin{split}
R^{L_0}_Z=& (1-\rho^2(|Z|/\varepsilon))R^L_{x_0}+\rho^2(|Z|/\varepsilon)R^L_{\varphi_\varepsilon(Z)}\\
& -(\rho\rho^\prime)(|Z|/\varepsilon)\frac{\sum_{j=1}^{2n}Z_je^j}{\varepsilon |Z|}\wedge [R^L_{x_0}(\mathcal R,\cdot)-R^L_{\varphi_\varepsilon(Z)}(\mathcal R,\cdot)], 
\end{split}
\end{equation}
where $e^j$ is the dual base in $T^*_{x_0}X\cong T^*_ZX_0$.  

Now we proceed in a slightly different way than in \cite{ma-ma08}.
Recall that, for any $Z\in B^{T_{x_0}X}(0,r_X)\cong B^{X}(x_0,r_X)$, we have a skew-adjoint operator $B_Z : T_ZX_0\to T_ZX_0$ such that 
\[
iR^L_Z(u,v)=g_Z(B_Zu,v), \quad u,v\in T_ZX_0. 
\]
Its eigenvalues have the form $\pm i a_j(Z), j=1,\ldots,n,$ with $a_j(Z)>0$. We set 
\[
B^{X_0}_Z=B_{\varphi_\varepsilon(Z)},\quad V^{X_0}(Z)=V(\varphi_\varepsilon(Z)), \quad Z\in X_0,
\]
and define a symmetric bilinear form  $g^{X_0}_Z$ on $T_ZX_0$ by  
\[
g^{X_0}(u,v)=iR^{L_0}_Z((B^{X_0}_Z)^{-1}u,v), \quad u,v\in T_ZX_0,
\]
By \eqref{e:RL0}, tt is easy to sse that, for $\varepsilon$ small enough, $g^{X_0}$ is positive definite and define a Riemannian metric on $X_0$. From now on, we fix such an $\varepsilon>0$.

Let $\Delta^{L_0^p\otimes E_0}$ be the associated Bochner Laplacian acting on $C^\infty(X_0,L_0^p\otimes E_0)$.  Introduce the operator $H^{X_0}_p$ acting on $C^\infty(X_0,L_0^p\otimes E_0)$ by
\[
H^{X_0}_p=\frac 1p \Delta^{L_0^p\otimes E_0}+ V^{X_0}(Z).
\]
It is clear that, for any $u \in {C}^\infty_c(B^{X_0}(0,2\varepsilon))$, we have
\begin{align}\label{e:4.10}
H_pu(Z)=H^{X_0}_pu(Z).
\end{align}
Moreover, the eigenvalues of $B^{X_0}_Z$ are given by $\pm i a_j(\varphi_\varepsilon(Z)), j=1,\ldots,n$.  Therefore, the set $\Sigma$  for $H^{X_0}_p$ is contained in that for the operators $H_p$. Therefore, $\alpha,\beta\not \in \Sigma$. 

By Theorem \ref{t:spectrum}, there exists $p_0\in \NN$ such that for any $p>p_0$ 
\begin{equation}\label{e:DeltaX0-spectrum}
\sigma(H^{X_0}_p)\subset (-\infty, \alpha-\mu_0) \cup I \cup (\beta+\mu_0, \infty).
\end{equation}
with the same $\mu_0>0$ as above.  Let $P^0_{p,I}$ be the spectral projection of $H^{X_0}_p$ corresponding to the interval $I$ and $P^0_{p,I}(Z,Z^\prime)$, $Z,Z^\prime\in X_0$, be its smooth kernel with respect to the Riemannian volume form $dv_{X_0}$. As in \cite[Theorem 4.1]{ko-ma-ma} (extending \cite[Proposition 1.3]{ma-ma08}), one can show that the kernels $P_{p,I,x_0}(Z,Z^\prime)$ and $P^0_{p,I}(Z,Z^\prime)$ are asymptotically close on $B^{T_{x_0}X}(0,\varepsilon)$ in the $C^\infty$-topology, as $p\to \infty$. 

\begin{thm} \label{p:Pq-difference}
There exists $c_0>$ such that, for any $k\in \mathbb N$, there exists $C_{k}>0$ such that for any $p>p_0$, $x_0\in X$  and $Z,Z^\prime\in B^{X_0}(0,\varepsilon)$,  
 \[
|P_{p,I,x_0}(Z,Z^\prime)-P^0_{p,I}(Z,Z^\prime)|_{C^k}\leq C_{k}e^{-c_0\sqrt{p}}.  
\]
 \end{thm}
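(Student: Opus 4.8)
The plan is to represent both spectral projections by the contour integral \eqref{e:Pqp-integral} and to reduce the statement to a uniform off-diagonal estimate for iterated resolvents, which is then handled by the weighted-estimate machinery of Section \ref{s:res-estimates}. Since the set $\Sigma$ associated with $H^{X_0}_p$ is contained in the set $\Sigma$ associated with $H_p$, the inclusion \eqref{e:DeltaX0-spectrum} shows that the same rectangle $\Pi$ and its boundary $\Gamma$ may be used for $H^{X_0}_p$, and all the results of Section \ref{s:res-estimates} (in particular Theorem \ref{Thm1.9a}, and \eqref{e:res3}) hold verbatim for $H^{X_0}_p$ with the same domain $\Omega\supset\Gamma$. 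Fix a large integer $m$, so that by Propositions \ref{p:delta} and \ref{p:Sobolev} the sections $(\lambda-H_p)^{-m}\delta_v$ and $(\lambda-H^{X_0}_p)^{-m}\delta_v$ lie in $C^k_b$ for every unit $v$. Then for $p>p_0$ and $|Z|,|Z'|<\varepsilon$,
\[
P_{p,I,x_0}(Z,Z')-P^0_{p,I}(Z,Z')=\frac{1}{2\pi i}\int_\Gamma\lambda^{m-1}\big[(\lambda-H_p)^{-m}-(\lambda-H^{X_0}_p)^{-m}\big](Z,Z')\,d\lambda,
\]
so it is enough to bound the bracketed Schwartz kernel in the $C^k$-norm by $C_k e^{-c_0\sqrt p}$, uniformly in $\lambda\in\Gamma$ and $x_0\in X$.

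Next I would fix a cutoff $\phi\in C^\infty_c(B^{X_0}(0,2\varepsilon))$ with $\phi\equiv 1$ on a neighbourhood of $\overline{B^{X_0}(0,\varepsilon)}$, with all derivative bounds independent of $x_0$; write $\mathcal A:=\operatorname{supp}\nabla\phi$, which is separated from $B^{X_0}(0,\varepsilon)$ by a fixed distance $\delta_0>0$. Given $Z'\in B^{X_0}(0,\varepsilon)$ and a unit vector $v\in (L^p_0\otimes E_0)_{Z'}$, set $w_p=(\lambda-H_p)^{-m}\delta_v$ and $w^0_p=(\lambda-H^{X_0}_p)^{-m}\delta_v$. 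Since $\phi$ is supported where, by \eqref{e:4.10}, $H_p$ and $H^{X_0}_p$ coincide as differential operators, one has $(\lambda-H_p)^m(\phi w^0_p)=(\lambda-H^{X_0}_p)^m(\phi w^0_p)=\delta_v+[(\lambda-H^{X_0}_p)^m,\phi]\,w^0_p$, using $\phi\,\delta_v=\delta_v$. Applying $(\lambda-H_p)^{-m}$ and restricting to $Z\in B^{X_0}(0,\varepsilon)$, where $\phi\equiv 1$, this gives the key identity
\[
(\lambda-H_p)^{-m}(Z,Z')-(\lambda-H^{X_0}_p)^{-m}(Z,Z')=\big((\lambda-H_p)^{-m}g\big)(Z),\qquad g:=[(\lambda-H^{X_0}_p)^m,\phi]\,w^0_p,
\]
where $g$ is supported in $\mathcal A$.

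The heart of the matter is that $g$ is exponentially small in every Sobolev norm, uniformly in $\lambda\in\Gamma$ and $x_0$. Indeed $[(\lambda-H^{X_0}_p)^m,\phi]$ is a differential operator of order $2m-1$ whose coefficients are supported in $\mathcal A$ and, when expressed through $\tfrac1{\sqrt p}\nabla^{L^p_0\otimes E_0}$, are bounded uniformly in $p,\lambda,x_0$; hence $g$ is controlled on $\mathcal A$ by the covariant derivatives of $w^0_p=(\lambda-H^{X_0}_p)^{-m}\delta_v$ there, while the source point $Z'$ is at distance at least $\delta_0$ from $\mathcal A$. Applying to $H^{X_0}_p$ the off-diagonal exponential estimates for the kernel of $(\lambda-H^{X_0}_p)^{-m}$ and its derivatives of arbitrary order — obtained exactly as in the proof of Theorem \ref{t:exp-Pp} in Section \ref{s:far-off-diagonal}, from Theorem \ref{Thm1.9a} together with Propositions \ref{p:Sobolev} and \ref{p:delta} — yields $\|g\|_{p,l}\le C_l\,p^{N}e^{-c\sqrt p\,\delta_0}$ for every $l$. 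Since, by Theorem \ref{Thm1.9a} with $\alpha=0$, the operator $(\lambda-H_p)^{-m}$ maps $H^l(X,L^p\otimes E)$ to $H^{l+2m}(X,L^p\otimes E)$ with norm bounded uniformly in $\lambda\in\Gamma$ and $p$, the section $(\lambda-H_p)^{-m}g$ is exponentially small in $H^{l+2m}$; Proposition \ref{p:Sobolev} then turns this into an exponentially small pointwise $C^k$-bound, the polynomial factor $p^N$ being absorbed by slightly decreasing the exponential rate. Integrating over the compact contour $\Gamma$ finishes the proof, with any $c_0\in(0,c\delta_0)$.

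I expect the main obstacle to be organizational rather than conceptual: one must re-run the weighted-estimate scheme of Sections \ref{s:res-estimates} and \ref{s:far-off-diagonal} for the auxiliary operator $H^{X_0}_p$ in order to obtain the $C^k$ off-diagonal exponential decay of $(\lambda-H^{X_0}_p)^{-m}$ uniformly in $\lambda\in\Gamma$ and $x_0\in X$ — but this is entirely parallel to the proof of Theorem \ref{t:exp-Pp}, since $H^{X_0}_p$ is again a Bochner--Schr\"odinger operator of bounded geometry with non-degenerate, uniformly positive magnetic field once $\varepsilon$ is small. The only genuinely new point, that $H_p$ and $H^{X_0}_p$ act on different Hilbert spaces, is neutralized by the cutoff $\phi$, which confines the entire computation to $B^{X_0}(0,2\varepsilon)$, where by \eqref{e:4.10} the two operators are literally the same differential operator.
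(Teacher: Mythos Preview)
Your proposal is correct and is precisely the argument the paper is pointing to when it defers to \cite[Theorem 4.1]{ko-ma-ma}: contour-integral representation, a cutoff $\phi$ supported where \eqref{e:4.10} makes $H_p$ and $H^{X_0}_p$ coincide, the commutator identity producing a remainder supported in the annulus $\mathcal A$, and then the weighted resolvent estimates of Section~\ref{s:res-estimates} (applied to $H^{X_0}_p$, uniformly in $x_0$) to get exponential smallness. The paper gives no independent proof here, so your write-up is exactly the expected expansion of the cited reference; the only cosmetic slip is a sign in the key identity (one gets $w^0_p-w_p=(\lambda-H_p)^{-m}g$ rather than the other way), which is immaterial.
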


 \subsection{Rescaling and formal expansions}\label{scale}
Theorem~\ref{p:Pq-difference} reduces our considerations to the operator family $H^{X_0}_{p}$ acting on $C^\infty(X_0, L_0^p\otimes E_0)\cong C^\infty(\RR^{2n},E_{x_0})$ (parametrized by $x_0\in X$). 

We use the rescaling introduced in \cite[Section 1.2]{ma-ma08}. Let $dv_{X_0}$ be the Riemannian volume form of $(X_0, g^{X_0})$. Then $\kappa_{X_0}$ is the smooth positive function on $X_0$ defined by the equation
\[
dv_{X_0}(Z)=\kappa_{X_0}(Z)dZ, \quad Z\in X_0. 
\] 

Denote $t=\frac{1}{\sqrt{p}}$. For $s\in C^\infty(\mathbb R^{2n}, E_{x_0})$, set
\[
S_ts(Z)=s(Z/t), \quad Z\in \mathbb R^{2n}.
\]
Define the rescaling of the operator $H_p^{X_0}$ by
\begin{equation}\label{scaling}
\mathcal H_t=S^{-1}_t\kappa_{X_0}^{\frac 12}H_p^{X_0}\kappa_{X_0}^{-\frac 12}S_t.
\end{equation}
This is a second order differential operator.
We  expand its coefficients in Taylor series in $t$. For any $m\in \NN$, we get
\begin{equation}\label{e:Ht-formal}
\mathcal H_t=\mathcal H^{(0)}+\sum_{j=1}^m \mathcal H^{(j)}t^j+\mathcal O(t^{m+1}), 
\end{equation}
where there exists $m^\prime\in \NN$ so that for every $k\in\NN$ and $t\in [0,1]$ the derivatives up to order $k$ of the coefficients of the operator $O(t^{m+1})$ are bounded by $Ct^{m+1}(1+|Z|)^{m^\prime}$. 

The leading term $\mathcal H^{(0)}$ is given by \eqref{e:DeltaL0p}. By \cite[Theorem 1.4]{ma-ma08}, the next terms $\mathcal H^{(j)}, j\geq 1,$ have the form
\begin{equation}\label{e:Hj}
\mathcal H^{(j)}=\sum_{k,\ell=1}^{2n} a_{k\ell,j}\frac{\partial^2}{\partial Z_k\partial Z_\ell}+\sum_{k=1}^{2n} b_{k,j}\frac{\partial}{\partial Z_k}+c_{j},
\end{equation}
where $a_{k\ell,j}$ is a homogeneous polynomial in $Z$ of degree $j$,  $b_{kj}$ is a polynomial in $Z$ of degree $\leq j+1$ (of the same parity with $j-1$) and $c_{j}$ is a polynomial in $Z$ of degree $\leq j+2$ (of the same parity with $j$). More precisely, for the operator $H_p=\frac{1}{p}\Delta_p$, the operator $\mathcal H^{(j)}$ coincides with the operator $\mathcal O_j$ introduced in that theorem. In the geeral case, we have 
\[
\mathcal H^{(j)}=\mathcal O_j+\sum_{|\alpha|=j}(\partial^\alpha(V+\tau))_{x_0}\frac{Z^\alpha}{\alpha!},\quad j=1,2,\ldots,
\]
In \cite[Theorem 1.4]{ma-ma08}, explicit formulas are given for $\mathcal O_1$ and $\mathcal O_2$. We refer the reader to \cite{ma-ma08,ma-ma:book} for more details. 

\subsection{Asymptotic expansions of the spectral projection} \label{asymp}
By construction, the operator $\mathcal H_{t}$ is a self-adjoint operator in $L^2(\RR^{2n}, E_{x_0})$, and its spectrum coincides with the spectrum of $H_p^{X_0}$.  By \eqref{e:DeltaX0-spectrum}, there exists $t_0>0$ such that for any $t\in (0,t_0]$
\[
\sigma(\mathcal H_{t})\subset (-\infty, \alpha-\mu_0) \cup I \cup (\beta+\mu_0, \infty).
\]

Let $\mathcal P_{t}$ be the spectral projection of $\mathcal H_t$, corresponding to the interval $I$ and $\mathcal P_{t}(Z,Z^\prime)=\mathcal P_{t,x_0}(Z,Z^\prime)$ be its smooth kernel with respect to $dZ$. 
For any integer $k>0$, we can write (with $\Gamma$ as above)
\begin{equation}\label{e:Pqt-s7}
\mathcal P_{t}=\frac{1}{2\pi i}
\int_\Gamma \lambda^{k-1}(\lambda-\mathcal H_{t})^{-k}d\lambda. 
\end{equation}

Now we can proceed as in \cite{Kor18,ma-ma08}. We only observe 
that all the constants in the estimates in \cite{Kor18,ma-ma08} depend 
on finitely many derivatives of $g$, $h^L$, $\nabla^L$, $h^E$, 
$\nabla^E$ and the lower bound of $g$. Therefore, by
the bounded geometry assumptions, all the estimates are uniform on
the parameter $x_0\in X$. We will omit the details and give only the
final result.

 \begin{thm}\label{t:thm7.2}
The function $\mathcal P_{t}(Z,Z^\prime)$ admits an asymptotic expansion
\[
\mathcal P_{t}(Z,Z^\prime)\cong \sum_{r=0}^{\infty}F_{r}(Z,Z^\prime)t^r, \quad t\to 0.
\]
For any $j\in \mathbb N$, the remainder $\mathcal R_{j,t}(Z,Z^\prime):=\mathcal P_{t}(Z,Z^\prime)
-\sum_{r=0}^jF_{r}(Z,Z^\prime)t^r$ satisfies the condition: for any $m,m^\prime\in \mathbb N$, there exist $C>0$ and $M>0$ such that for any $0\leq t\leq 1$ and $Z,Z^\prime\in T_{x_0}X$ 
\begin{multline}
\sup_{|\alpha|+|\alpha^\prime|\leq m}\Bigg|\frac{\partial^{|\alpha|+|\alpha^\prime|}}{\partial Z^\alpha\partial Z^{\prime\alpha^\prime}}\mathcal R_{j,t}(Z,Z^\prime)\Bigg|_{C^{m^\prime}(X)}\\ \leq Ct^{j+1}(1+|Z|+|Z^\prime|)^M\exp(-c|Z-Z^\prime|). 
\end{multline}
\end{thm}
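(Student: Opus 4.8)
The plan is to follow the method of \cite{ma-ma08,Kor18}: combine the Cauchy formula \eqref{e:Pqt-s7} with a formal resolvent expansion and weighted Sobolev estimates that are uniform in $t$ and in $x_0$. First I would fix, for each $t$, a scale of Sobolev spaces $\mathcal H^m_t$ of sections of $E_{x_0}$ over $\RR^{2n}$ whose norms are built from the rescaled covariant derivative, these being $t$-uniform analogues of \eqref{localSobolev}, and record the elementary mapping properties: the rescaled derivative raises the order by one, while multiplication by a polynomial of degree $d$ sends the polynomially weighted space $(1+|Z|)^{-N}\mathcal H^m_t$ to $(1+|Z|)^{-N-d}\mathcal H^m_t$ with $t$-independent norm. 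The basic analytic input is that, for $\lambda\in\Gamma$ and $t\in(0,t_0]$, the resolvent $(\lambda-\mathcal H_t)^{-1}$ maps $\mathcal H^m_t$ to $\mathcal H^{m+2}_t$ with norm bounded uniformly in $t$, $\lambda\in\Gamma$ and $x_0\in X$; this follows by unwinding the definition \eqref{scaling}, applying Theorem~\ref{Thm1.9a} to $H^{X_0}_p$ (which is an operator of the same type and, by Theorem~\ref{p:Pq-difference} and \eqref{e:DeltaX0-spectrum}, has the spectral gap around $I$), and then undoing the rescaling.

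Next I would run the formal inversion. With $\mathcal H_t=\sum_{j\geq 0}\mathcal H^{(j)}t^j$ as in \eqref{e:Ht-formal}, set $\mathcal C_0(\lambda)=(\lambda-\mathcal H^{(0)})^{-1}$ and, recursively,
\[
\mathcal C_r(\lambda)=\sum_{j=1}^{r}(\lambda-\mathcal H^{(0)})^{-1}\,\mathcal H^{(j)}\,\mathcal C_{r-j}(\lambda),\qquad r\geq 1 .
\]
Here $\mathcal H^{(0)}$ is the model operator (cf. \eqref{e:DeltaL0p}); it has the spectral gap from \eqref{e:DeltaX0-spectrum}, and $(\lambda-\mathcal H^{(0)})^{-1}$ preserves the polynomially weighted Sobolev scale with a bound of the form $C(1+|\lambda|)^{N}$ on $\Gamma$, which is the model-operator analogue of the estimate above. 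Since by \eqref{e:Hj} each $\mathcal H^{(j)}$ is second order with polynomial coefficients of degree at most $j+2$, an induction on $r$ shows that every $\mathcal C_r(\lambda)$ maps $\mathcal H^{m}_t$ into $\mathcal H^{m+2}_t$, with an admissible loss $(1+|Z|)^{N_r}$ in the weight and a bound $O((1+|\lambda|)^{N_r})$ on $\Gamma$. I then define, for $k$ large enough depending on the number of derivatives to be controlled,
\[
F_r(Z,Z')=\frac{1}{2\pi i}\int_\Gamma \lambda^{k-1}\!\!\sum_{r_1+\cdots+r_k=r}\big(\mathcal C_{r_1}(\lambda)\cdots\mathcal C_{r_k}(\lambda)\big)(Z,Z')\,d\lambda ,
\]
using the refined Sobolev embedding of Propositions~\ref{p:Sobolev} and \ref{p:delta} (in their $t$-rescaled, $x_0$-uniform form) to make sense of the kernel; for $r=0$ this is exactly the Cauchy formula for the spectral projection of $\mathcal H^{(0)}$, so $F_0=\mathcal P_{I,x_0}$.

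The remainder estimate comes from the operator identity
\[
(\lambda-\mathcal H_t)^{-1}-\sum_{r=0}^{j}\mathcal C_r(\lambda)t^r=t^{j+1}(\lambda-\mathcal H_t)^{-1}\,\mathcal E_{j,t}(\lambda),
\]
where $\mathcal E_{j,t}(\lambda)$ gathers the Taylor tail $\mathcal O(t^{m+1})$ from \eqref{e:Ht-formal} together with the products $\mathcal H^{(i)}\mathcal C_r(\lambda)$ with $i+r>j$. By the two previous steps, $\mathcal E_{j,t}(\lambda)$ is bounded from $(1+|Z|)^{-N}\mathcal H^{m+2}_t$ to $(1+|Z|)^{-N'}\mathcal H^{m}_t$ uniformly in $t\in(0,1]$ and $\lambda\in\Gamma$, with polynomial growth in $|\lambda|$. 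Expanding $(\lambda-\mathcal H_t)^{-k}$ accordingly, inserting into \eqref{e:Pqt-s7}, and passing from Sobolev to pointwise $C^{m'}$ bounds via Propositions~\ref{p:Sobolev}--\ref{p:delta} yields the estimate for $\mathcal R_{j,t}$ with the polynomial prefactor $(1+|Z|+|Z'|)^M$. Finally, the off-diagonal factor $\exp(-c|Z-Z'|)$ is inserted exactly as in the proof of Theorem~\ref{t:exp-Pp}: conjugate all resolvents by $e^{\alpha\varrho}$ for a weight $\varrho$ comparable to a regularized version of $|Z-Z'|$ with uniformly bounded derivatives, use that Theorem~\ref{Thm1.9a} and the model-operator estimates survive for $|\alpha|$ bounded, and optimize in $\alpha$. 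Uniformity in $x_0$ is automatic from the bounded geometry hypotheses, since every constant depends only on finitely many derivatives of $g,h^L,\nabla^L,h^E,\nabla^E,V$ and on the lower bound for $g$. I expect the main obstacle to be the bookkeeping in the middle step: controlling simultaneously the $|Z|$-growth from the polynomial coefficients of the $\mathcal H^{(j)}$, the $|\lambda|$-growth along $\Gamma$, and the order in the Sobolev scale, while keeping every constant independent of $t$ and $x_0$ — this is precisely where the weighted Sobolev machinery of \cite{Kor18,ko-ma-ma} does the real work.
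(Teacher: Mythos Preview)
Your proposal is correct and follows essentially the same approach as the paper: the paper itself does not give a detailed proof but defers to \cite{Kor18,ma-ma08}, only noting that all constants depend on finitely many derivatives of the geometric data and hence are uniform in $x_0$ by bounded geometry; your outline (rescaled Sobolev scale, uniform resolvent bounds via Theorem~\ref{Thm1.9a}, formal resolvent expansion $\mathcal C_r(\lambda)$, Cauchy integral \eqref{e:Pqt-s7}, and exponential conjugation for the off-diagonal factor) is exactly the method of those references, and your recursive $\mathcal C_r$ coincide with the paper's $f_r$ in Section~\ref{asymp}. One small correction: the spectral gap for $\mathcal H_t$ (equivalently $H^{X_0}_p$) is obtained directly from Theorem~\ref{t:spectrum} applied to $H^{X_0}_p$, as recorded in \eqref{e:DeltaX0-spectrum}, not via Theorem~\ref{p:Pq-difference}.
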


By \eqref{scaling}, we have
\[
P^0_{p,I}(Z,Z^\prime)=t^{-2n}\kappa^{-\frac 12}(Z)\mathcal P_{t}(Z/t,Z^\prime/t)\kappa^{-\frac 12}(Z^\prime), \quad Z,Z^\prime \in \mathbb R^{2n},
\]
that completes the proof of the asymptotic expansion \eqref{e:main-exp} in Theorem~\ref{t:main}.

\subsection{Computation of the coefficients}
We will use the formal power series technique developed in \cite[Section 1.5]{ma-ma08}. We take the formal asymptotic expansion for the operator $\mathcal H_{t}$ given by \eqref{e:Ht-formal}
and find a formal asymptotic expansion for the resolvent $(\lambda-\mathcal H_t)^{-1}$, $\lambda\in \Pi$, solving the formal power series equation 
\begin{equation}\label{e:Ltres-formal}
(\lambda-\mathcal H_t)f(t,\lambda)=I,
\end{equation}
where 
\[
f(t,\lambda)=\sum_{r=0}^{\infty} t^rf_r(\lambda), \quad f_r(\lambda)\in \operatorname{End}(L^2(\RR^{2n}, E_{x_0})).
\]
Identifying the coefficients in $t$, we get
\[
f_0(\lambda)=(\lambda-\mathcal H^{(0)})^{-1},
\] 
\[
f_r(\lambda)=(\lambda-\mathcal H^{(0)})^{-1}\sum_{j=1}^r\mathcal H^{(j)}f_{r-j}, \quad r\geq 1.
\] 
We find that 
\[
f_r(\lambda)=\sum_{\substack{k\geq 1, j_\ell \geq 1\\ j_1+\ldots+j_k=r}}(\lambda-\mathcal H^{(0)})^{-1}\mathcal H^{(j_1)}(\lambda-\mathcal H^{(0)})^{-1}\mathcal H^{(j_2)}\ldots \mathcal \mathcal H^{(j_k)} (\lambda-\mathcal H^{(0)})^{-1},
\]
Recall that $\mathcal P_{I}$ denotes the spectral projection of $\mathcal H^{(0)}$, corresponding to $I$, and put $\mathcal P^\bot_{I}=I-\mathcal P_{I}$. Using \eqref{e:Lambda-Bergman}, we can write
\[
(\lambda-\mathcal H^{(0)})^{-1}=\sum_{(\mathbf k,\mu) : \Lambda_{\mathbf k,\mu}\in I} \frac{1}{\lambda-\Lambda_{\mathbf k,\mu}}\mathcal P_{\Lambda_{\mathbf k,\mu}}+(\lambda-\mathcal H^{(0)})^{-1}\mathcal P^\bot_{I},
\]
Observe that the second term in the right hand side of this equality is an analytic function for $\lambda\in\Pi$. We infer that 
\begin{equation}\label{e:fr}
f_r(\lambda)=\Phi_{r}(\lambda)+\Phi^\bot_{r}(\lambda).
\end{equation}
where $\Phi^\bot_{r}$ is an analytic function in $\Pi$ given by
\[
\Phi^\bot_{r}(\lambda)=\sum_{\substack{k\geq 1, j_\ell \geq 1\\ j_1+\ldots+j_k=r}}(\lambda-\mathcal H^{(0)})^{-1}\mathcal P^\bot_{I} \mathcal H^{(j_1)}(\lambda-\mathcal H^{(0)})^{-1}\mathcal P^\bot_{I}\mathcal H^{(j_2)}\ldots \mathcal H^{(j_k)} (\lambda-\mathcal H^{(0)})^{-1}\mathcal P^\bot_{I},
\]
and 
\begin{equation}\label{e:Pr}
\Phi_{r}(\lambda)=\sum_{\substack{k\geq 1, j_\ell \geq 1\\ j_1+\ldots+j_k=r}}\mathcal R_0\mathcal H^{(j_1)}\mathcal R_1\mathcal H^{(j_2)}\ldots \mathcal H^{(j_k)} \mathcal R_k,
\end{equation}
where at least one of $\mathcal R_0, \ldots, \mathcal R_k$ equals $\frac{1}{\lambda-\Lambda_{\mathbf k,\mu}}\mathcal P_{\Lambda_{\mathbf k,\mu}}$ with $\Lambda_{\mathbf k, \mu}\in I$. Using \eqref{e:Pqt-s7} and \eqref{e:Ltres-formal}, we get a formal asymptotic expansion for $\mathcal P_{t}$:
\[
\mathcal P_{t}=\frac{1}{2\pi i}\int_\Gamma (\lambda-\mathcal H_{t})^{-1}d\lambda=\frac{1}{2\pi i} \sum_{r=0}^{\infty} t^r \int_\Gamma f_r(\lambda)d\lambda=\frac{1}{2\pi i} \sum_{r=0}^{\infty} t^r \int_\Gamma  \Phi_{r}(\lambda) d\lambda, 
\]
which gives 
\begin{equation}\label{e:FrPhi}
F_{r}=\frac{1}{2\pi i}\int_\Gamma \Phi_{r}(\lambda)d\lambda.
\end{equation}

For $r=0$, we have  
\[
\Phi_0(\lambda)=\sum_{(\mathbf k,\mu) : \Lambda_{\mathbf k, \mu}\in I} \frac{1}{\lambda-\Lambda_{\mathbf k,\mu}}\mathcal P_{\Lambda_{\mathbf k,\mu}}.
\]
By \eqref{e:FrPhi}, this proves \eqref{e:F0}.

Consider the set $\mathcal A$ of operators in $L^2(\RR^{2n},E_{x_0})$ with smooth kernel of the form $K(Z,Z^\prime)\mathcal P(Z,Z^\prime)$, where $K(Z,Z^\prime)$ is a polynomial in $Z, Z^\prime$ (here $\mathcal P(Z,Z^\prime)$ is the Bergman kernel given by \eqref{e:Bergman}). Let us show that, for any $\lambda\not\in \Sigma\cap I$, the operator $\Phi_{r}(\lambda)$ is in $\mathcal A$. By \eqref{e:FrPhi}, this will immediately imply that $F_r\in\mathcal A$ and prove \eqref{e:Fr}

It is easy to see that $\mathcal A$ is an involutive algebra with respect to the composition and the adjoint. Moreover, it is a filtered algebra with filtration given by the degree of the polynomial $K$.  By these properties, it is easy to see that it is suffices to prove that, for any $j_1,j_2,\ldots,j_k$, 
\begin{equation}\label{e:inclusion}
(\lambda-\mathcal H^{(0)})^{-1}\mathcal P^\bot_{I} \mathcal H^{(j_1)}\ldots (\lambda-\mathcal H^{(0)})^{-1}\mathcal P^\bot_{I} \mathcal H^{(j_k)}\mathcal P_{\Lambda_{\mathbf k,\mu}}\in \mathcal A_N
\end{equation}
with $N=\kappa(I)+j_1+\ldots+j_k+2k$.  

First, observe that $\mathcal P_{\Lambda_{\mathbf k,\mu}}\in \mathcal A_{\kappa(I)}$. Using the description of the coefficients $\mathcal H^{(j)}$ given by \eqref{e:Hj}, one can easily see that, for any $j$ and $A\in \mathcal A_N$, the operator $\mathcal H^{(j)}A$ belongs to $\mathcal A_{j+N+2}$. Finally, one can show that, for any $A\in \mathcal A_N$, the operator $(\lambda-\mathcal H^{(0)})^{-1}\mathcal P^\bot_{I} A$ belongs to $\mathcal A_N$. It follows immediately if we diagonalize the operator $\mathcal H^{(0)}$ in the orhonormal base of its eigenfunction and use the explicit description of its eigenvalues given, for instance, in \cite[Section 1.4]{ma-ma08}. This immediately completes the proof of \eqref{e:inclusion}.

\end{document}